\documentclass[11pt, a4paper, article]{amsart}
\usepackage{graphicx, xcolor, amsmath, amssymb}


\newtheorem{theorem}{Theorem}[section]
\newtheorem{proposition}[theorem]{Proposition}

\newtheorem{lemma}[theorem]{Lemma}

\newtheorem{example}[theorem]{Example}
\newtheorem{remark}[theorem]{Remark}

\title{\sf\large Small, medium and large shock \\
waves for radiative Euler equations}
\author{Corrado Mascia}
\address{Dipartimento di Matematica ``G. Castelnuovo''\\ 
Sapienza -- Universit\`a di Roma\\ 
P.le Aldo Moro, 2 - 00185 Roma (ITALY)}

\textheight22cm
\textwidth15cm
\oddsidemargin.4cm
\evensidemargin.4cm

\begin{document}

\baselineskip15pt
\thispagestyle{empty}

\begin{center}\sf
{\Large Small, medium and large shock waves}\vskip.2cm
{\Large for non-equilibrium radiation hydrodynamics}\vskip.25cm
{\tt \today}\vskip.2cm
Corrado MASCIA\footnote{Dipartimento di
  Matematica ``G. Castelnuovo'', Sapienza -- Universit\`a di Roma, P.le Aldo
  Moro, 2 - 00185 Roma (ITALY), \texttt{\tiny mascia@mat.uniroma1.it}
  {\sc and} {Istituto per le Applicazioni del Calcolo, Consiglio Nazionale delle Ricerche
  (associated in the framework of the program ``Intracellular Signalling'')}}
\end{center}
\vskip.5cm

\begin{quote}\footnotesize\baselineskip 12pt 
{\sf Abstract.} 
We examine the existence of shock profiles for a hyperbolic-elliptic system arising
in radiation hydrodynamics. 
The algebraic-differential system for the wave profile is reduced to a standard
two-dimensional form that is analyzed in details showing the existence of heteroclinic
connection between the two singular points of the system for any distance between
the corresponding asymptotic states of the original model.
Depending on the location of these asymptotic states, the profile can be either continuous
or possesses at most one point of discontinuity. 
Moreover, a sharp threshold relative to presence of an internal absolute maximum 
in the temperature profile --also called {\sf Zel'dovich spike}-- is rigourously derived.
\vskip.15cm

{\sf Keywords.}
shock profiles, radiating gases, hyperbolic-elliptic systems.
\vskip.15cm

{\sf 2010 AMS subject classifications.} 
76L05  
(35L67,  35M30, 35Q31) 
\end{quote}

\section{Introduction}

Fluid dynamics equations support a class of special and significant solutions known as {\sf shock waves}.
These describe physical phenomena consisting in an abrupt transition from one state to another, that,
in the simplest setting, are described by a single jump connecting two different states and propagating,
in first approximation, with constant speed.
In formulas, these corresponds to special solutions of the system under consideration having the form
\begin{equation*}
	w(x,t)=W(x-ct)\qquad W(\pm\infty)=W_\pm,
\end{equation*}
where $w$ denotes the state variable, $W$ the shock profile, $c$ the velocity and $W_\pm$ the
asymptotic states.
The difference $[W]:=W_+-W_-$ and its length are generally used to distinguish between small,
$|W_+-W_-|\to 0$, and large shocks, $|W_+-W_-|\to \infty$.

In the relevant case of the Euler equations for gasdynamics in one space dimension, obtained considering
the conservation of mass, momentum and energy and neglecting higher order effect as viscosity, capillarity,
and thermal conductivity, traveling wave solutions propagate with a velocity dictated by the Rankine--Hugoniot
condition.
Moreover, they have piecewise constant profiles with a single jump point, where physical entropy increases
along the discontinuity.
The persistence of the presence of waves when considering also higher order effects has been  considered 
by several authors and in different context.
Among others, let us quote the classical reference \cite{Gilb51} where the existence of shock layers
for a model taking into account both viscosity and heat conductivity.

The present article fits into the strand of radiation hydrodynamics (see \cite{MihaMiha84, ZeldRaiz02}). 
Specifically, we consider the classic Euler equations coupled with an elliptic equation for an additional
variable describing the intensity of radiation; 
precisely, we deal with the hyperbolic-elliptic system
\begin{equation}\label{radeuler}
	\left\{\begin{aligned}
		&\partial_t \rho+\partial_x(\rho\,u)=0,\\
		&\partial_t (\rho\,u)+\partial_x(\rho\,u^2+p)=0,\\
		&\partial_t (\rho\,E)+\partial_x(\rho\,E\,u+p\,u-\sigma_s^{-1}\,\partial_x n)=0,\\
		&\partial_x^2 n = \tau^2(n - g(\theta))
	\end{aligned}\right.
\end{equation}
where the variables $\rho, u, p, E$ and $\theta$ represent density, velocity, pressure, specific total energy 
and temperature of the fluid under consideration.
The additional variable $n$ describes the specific intensity of radiation and the coupling is
governed by the constants $\tau, \sigma_s$ and by the function $g=g(\theta)$.

The model \eqref{radeuler} derives from a hyperbolic--kinetic system where the radiation
is described by an additional variable for the photons density satisfying a transport equation
with interaction kernel given by the Stefan--Boltzmann law.
System \eqref{radeuler} is obtained in the non-relativistic limit (speed of light tending to $+\infty$)
and the variable $n$ is an average of the photon density.
The elliptic equation for the variable $n$ emerges from the assumption that the 
variable $n$ is linked with the temperature by the integral relation
\[
	n(x,t)=\int_{\mathbb{R}} g(\theta(y,t))\,K(|x-y|)\,dy
\]
with an exponential kernel $K(s)=\frac12\,\tau\,e^{-\tau\,s}$.
Details on the derivation of asymptotic regimes as the one described by \eqref{radeuler}
can be found in \cite{BuetDesp04, GodiGoud05, LowrMoreHitt99}.

As usual, the pressure is considered a function of density and temperature, $p=p(\rho,\theta)$ and
the specific total energy $E$ is given by
\[
	E=\frac12\,u^2+e
\]
where the internal energy $e$, in general, may depend on $\rho$ and $\theta$, $e=e(\rho,\theta)$.
Precisely, we concentrate on the case of polytropic perfect gases; 
this amounts in considering the functions $p, e, g$ given by the formulas
\begin{equation}\label{specialpeg}
	p=R\,\rho\,\theta,\qquad
	e=\frac{R}{\gamma-1}\,\theta,\qquad
	g(\theta)=\sigma\,\theta^\alpha
\end{equation}
where $\gamma>1$ is the adiabatic constant ($\gamma=5/3$ for monoatomic gases,
and $\gamma=7/5$ for diatomic gases), $R>0$ the perfect gas constant,
$\sigma$ the Stefan--Boltzmann constant and $\alpha=4$.

Existence of shock profiles in the context of radiation hydrodynamics have been considered
for many years.
In \cite{HeasBald63, ZeldRaiz02, MihaMiha84}, a formal analysis is performed concentrating
on specific ranges for the asymptotic states relative to the size of the shock and the strength
of the radiation effect.
More recently, rigorous proof has been presented:\\
-- for a simplified model, known as the  {\sf Hamer model}, consisting of a single
scalar conservation law coupled with an elliptic equation, mimicking the form of the system
\eqref{radeuler} (small and smooth shocks \cite{SchoTadm92}, 
general and possibly discontinuous shocks \cite{KawaNish99, LMS1, LMS2});\\
-- for systems, both for the special form \eqref{radeuler} (small and smooth shocks \cite{LinCoulGoud06},
large and discontinuous shocks \cite{CGLL}) and for general hyperbolic--elliptic systems
(small shocks and linear coupling \cite{LMS1}, small shocks and nonlinear coupling \cite{LMS2}).
\vskip.25cm

Here, we contribute to the exploration of system \eqref{radeuler} showing existence 
of shock profiles for all of the possible regimes: small, medium and large size.
Apart for the requests on the form of constutive functions $p$, $e$ and $g$, the unique
assumption is that the asymptotic states connected by the shock profile determine an 
admissible shock wave for the corresponding reduced hyperbolic system.

\begin{theorem}\label{thm:existeuler}
Assume \eqref{specialpeg} with $R>0, \gamma>1, \alpha>0$.
Let the triples $(\rho_\pm, u_\pm, \theta_\pm)$ and the constant $c\in\mathbb{R}$ 
be such that the Rankine--Hugoniot conditions
\[
	c[\rho]=[\rho\,u],\qquad	c[\rho\,u]=[\rho\,U^2+p],\qquad		c[\rho\,E]=[\rho\,E\,U+p\,U]
\]
are satisfied, together with the entropy condition for 1-shocks
\[
		c+\sqrt{\gamma\,\theta_-}<u_-, \qquad
		c<u_+<c+\sqrt{\gamma\,\theta_+}
\]
Then system \eqref{radeuler} supports a shock wave, that is solution of the form
\[
	(\rho, u, \theta, n)=(\rho, u, \theta, n)(x-ct)
\]
such that $(\rho,u,\theta,n,\partial_x n)(\pm\infty)=(\rho_\pm,u_\pm,\theta_\pm,g(\theta_\pm),0)$.

The profile is unique up to translations and possesses at most one discontinuity point; 
if this is the case, all of the variables $\rho, u$ and $\theta$ have a jump at such point.
\end{theorem}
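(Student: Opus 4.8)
The plan is to reduce the profile problem to a planar autonomous system and then construct the connecting orbit by a phase-plane analysis, inserting a single admissible sub-shock at the point where the orbit is forced to cross a line of singularities of the vector field.

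First I would insert the traveling-wave ansatz and integrate the three conservation laws once. Writing $m := \rho(u-c)$, the first equation shows that $m$ is constant along the profile and equals $\rho_\pm(u_\pm - c)$; the entropy condition guarantees $m>0$. The remaining two integrated laws read $mu + p = B$ and $mE + pu - \sigma_s^{-1}n' = C$ for constants $B,C$ fixed by the asymptotic states. Using \eqref{specialpeg}, these relations express $\rho, p$ and $\theta$ as explicit functions of $u$ alone, and yield $n' = \sigma_s\,\Phi(u)$ where $\Phi$ is an explicit quadratic whose two roots are precisely $u_-$ and $u_+$ (this is exactly the Rankine--Hugoniot content). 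Differentiating this identity and substituting the elliptic equation $n'' = \tau^2(n - g(\theta))$ produces the planar system
\[
u' = \frac{\tau^2\,(n - G(u))}{\sigma_s\,\Phi'(u)},\qquad n' = \sigma_s\,\Phi(u),
\]
with $G(u) := g(\theta(u))$. The asymptotic states correspond to the two equilibria $P_\pm = (u_\pm, G(u_\pm))$, which also realize the prescribed limits $n(\pm\infty)=g(\theta_\pm)$ and $n'(\pm\infty)=0$.

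Next I would analyze this system in the phase plane. A direct computation shows that the Jacobian at each equilibrium has determinant $-\tau^2<0$, so $P_-$ and $P_+$ are both saddles, with one-dimensional stable and unstable manifolds. The leading coefficient of $\Phi$ is negative, so $\Phi$ is a downward parabola with vertex at $u_* = \tfrac12(u_-+u_+)$; ordering $u_+<u_*<u_-$, the strip $u\in(u_+,u_-)$ splits along the singular line $u=u_*$ into a ``supersonic'' branch containing $P_-$ (on which $\Phi'<0$) and a ``subsonic'' branch containing $P_+$ (on which $\Phi'>0$). The orbit is then built by following the branch of the unstable manifold of $P_-$ that enters the strip: using the scalar reformulation $dn/du = \sigma_s^2\,\Phi(u)\Phi'(u)/[\tau^2(n-G(u))]$ together with an invariant region bounded by the nullcline $n=G(u)$ and the parabola, I would control the sign of $n - G(u)$, thereby showing that $u(\xi)$ is monotone and that the orbit reaches the singular line $u=u_*$ at a well-defined value $n=n_*$.

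The decisive step is the crossing of $u=u_*$, where $\Phi'$ vanishes and $u'$ blows up. Since $n$ solves a second-order equation, $n$ and $n'$ remain continuous across any such point, so the constraint $n'=\sigma_s\Phi(u)$ forces $\Phi(u_L)=\Phi(u_R)$ at a crossing; by the symmetry of the parabola this means either $u_L=u_R=u_*$ (a continuous, smooth passage, occurring exactly when $n_*=G(u_*)$) or a jump to the reflected point $u_R = 2u_* - u_L$ on the subsonic branch. Such a jump, holding $n$ and $n'$ fixed, is precisely a Rankine--Hugoniot discontinuity of the reduced Euler system and is Lax-admissible, so it furnishes the sought sub-shock. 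I would then verify that the post-jump state lies on the branch of the stable manifold of $P_+$, completing the heteroclinic connection; comparing $n_*$ with $G(u_*)$ yields the sharp threshold between continuous and discontinuous profiles and, through $\theta=\theta(u)$, the criterion for the Zel'dovich spike. The hard part is precisely this matching: since both equilibria are saddles, one must show \emph{globally} that the unstable manifold of $P_-$ is carried onto the stable manifold of $P_+$, either directly at the vertex or through the admissible jump. Monotonicity of $u(\xi)$ on each branch, obtained from the invariant-region argument, guarantees that the singular line is crossed at most once, giving uniqueness up to translation and at most one discontinuity point; because $\rho, u$ and $\theta$ are non-degenerate functions of $u$, all three jump simultaneously there.
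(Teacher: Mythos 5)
Your reduction is exactly the paper's: integrate the three conservation laws, express $\rho,p,\theta$ through $U=u-c$, obtain $n'=\sigma_s f(U)$ with $f$ a concave quadratic vanishing at $U_\pm$, and differentiate to get the planar system for $(U,n)$ with saddles at $P_\pm$ (your determinant $-\tau^2$ matches the paper's computation after rescaling). The jump mechanism --- $n$ and $n'$ continuous, hence $f(U_\ell)=f(U_r)$, hence $U_r=2U_c-U_\ell$ with Lax admissibility --- is also the paper's. So the route is the same; the problem is that the one step you yourself single out as ``the hard part'' is left as an announcement rather than an argument, and it is precisely the step that makes the theorem true.

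Concretely: the jump does \emph{not} occur at the sonic line $U=U_c$ (there the reflection $U_r=2U_c-U_\ell$ would be the identity), so ``the orbit reaches the singular line at $n=n_*$ and then jumps'' cannot be the construction. The jump must be inserted at an interior point $U_\ell>U_c$ of the unstable manifold of $P_-$, chosen so that the reflected point $(2U_c-U_\ell,\,n)$ lands exactly on the stable manifold of $P_+$ at the \emph{same} height $n$. Nothing in your proposal produces such a point or shows it is unique. The paper closes this gap with a short global argument: writing the two manifolds as graphs $y=\phi_\mp(x)$ over the scaled variable $x\in[-1,0]$ and $x\in[0,1]$ (monotone, with no vertical asymptote, hence reaching $x=0$ with finite values $\phi_-(0)\ge G(0)\ge\phi_+(0)$), one inverts them to $\psi_\mp(y)$ and considers $h(y):=\psi_+(y)+\psi_-(y)$ on $[\phi_+(0),\phi_-(0)]$; $h$ is strictly increasing and changes sign at the endpoints, so the intermediate value theorem gives a unique $y_c$ with $h(y_c)=0$, i.e.\ a unique symmetric pair $x_\ell=-x_r$ at equal height. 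You would need this (or an equivalent shooting/monotonicity argument) to complete the existence and the uniqueness-up-to-translation claims. A secondary omission: to conclude ``at most one discontinuity'' one must rule out trajectories recrossing the sonic line after the jump; the paper does this by noting that when $G'(0)>0$ the rescaled sonic equilibrium is repulsive on one side and attractive on the other, so no smooth passage from the subsonic back to the supersonic branch exists --- your appeal to monotonicity of $u(\xi)$ on each branch gestures at this but does not address a possible second admissible jump.
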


Once the existence of the shock waves is estabilished, it is meaningful to determine
qualitative properties of the profile and, precisely, its smoothness and its monotonicity.
To this aim, we introduce the velocities $U_\pm:=u_\pm-c$, giving the speed of propagation
of the asymptotic states relatively to a reference frame joint with the shock.
The following statement concerns with the regularity of the profile, giving sufficient conditions
for the presence of a jump point, and with the monotonicity of the component of the 
profile. 
Both they are stated in term of the key parameter given by the ratio between the velocity jump
of the shock,  i.e. $|u_+-u_-|=|U_+-U_-|$, and the average velocity given by the arithmetic mean
$(U_++U_-)/2$.
Precisely, we consider the parameter 
\begin{equation*}
	\delta:=\frac{|U_+-U_-|}{U_c}\qquad\textrm{where}\quad U_c:=\frac{U_++U_-}{2},
\end{equation*}
so that the measure of the shock size is always normalized with respect to the average
speed of propagation of the asymptotic states relatively to the shock velocity.

\begin{theorem}\label{thm:qualitative}
Assume \eqref{specialpeg} with $R>0, \gamma>1, \alpha>0$.\\
{\bf i.} If $\gamma<2$, the profile of the shock wave determined in Theorem \ref{thm:existeuler}
is discontinuous in the components $\rho, u$ and $\theta$ if 
\[
	\delta\geq \delta_{\textrm{jump}}:=\frac{2(\gamma-1)}{\gamma}\,.
\]
{\bf ii.} The component $\rho$ and $u$ of the profile of the shock determined
in Theorem \ref{thm:existeuler} are always monotone, while the  temperature
profile is monotone if and only if $\gamma<3$ and 
\[
	\delta\leq \delta_{\textrm{spike}}:=\frac{\gamma-1}{\gamma}\,.
\]
If this condition is not satisfied, the temperature profile has a single internal
absolute maximum point that can be either attained at the jump point 
or at some point of regularity.
\end{theorem}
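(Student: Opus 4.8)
The plan is to reduce the profile ODE system to a planar autonomous system and then read off regularity and monotonicity from its phase portrait.

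The plan is to read off all three statements from the reduced planar dynamics for the profile, using the explicit algebraic form of the hydrodynamic quantities in terms of $U=u-c$. Integrating the mass and momentum equations (which contain no radiation) once and fixing the constants by the asymptotic states gives $\rho\,U=m$ and $\rho\,U^2+p=P$, whence $\rho=m/U$ and $\theta=\theta(U):=U(P-mU)/(Rm)$. Thus $\theta$ is a downward parabola in $U$ with vertex at $U^\ast:=P/(2m)$, while the energy equation integrates to $n'=\Phi(U)$ with $\Phi$ a second downward parabola whose roots are exactly $U_\pm$ and whose vertex sits at the sonic velocity $U_s$, where $U^2=\gamma R\,\theta(U)$. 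Two identities drive everything: first $U_-+U_+=2U_s$ (so $U_s=U_c$), read off from the two zeros of $\Phi$; second $U^\ast<U_s$, since $U^\ast/U_s=(\gamma+1)/(2\gamma)<1$. From Theorem~\ref{thm:existeuler} I take the connecting orbit of the reduced two-dimensional system, on which one checks that $n-g(\theta(U))>0$ along the supersonic branch $U>U_s$ (where $\Phi'<0$) and $n-g(\theta(U))<0$ along the subsonic branch $U<U_s$ (where $\Phi'>0$).

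For statement {\bf ii}, monotonicity of $\rho=m/U$ and $u=U+c$ is equivalent to monotonicity of $U$, which has a fixed sign by the entropy condition. On each smooth arc $U'=\tau^2\,(n-g(\theta(U)))/\Phi'(U)$, and the sign information above forces $U'<0$ on both branches; the single admissible jump is compressive, so $U$ decreases there too. Hence $U$ is monotone, $\rho$ increasing and $u$ decreasing. Because $U$ sweeps $[U_+,U_-]$ monotonically and $\theta=\theta(U)$ is the downward parabola with vertex $U^\ast$, the temperature profile is monotone precisely when $U^\ast\notin(U_+,U_-)$; since $U^\ast<U_s=U_c<U_-$ always, this reduces to the single inequality $U^\ast\le U_+$.

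I then convert this into the parameter $\delta$. From $U_-+U_+=2U_s$ one gets $U^\ast=P/(2m)=(\gamma+1)(U_++U_-)/(4\gamma)$, so $U^\ast=U_+$ is equivalent to $r:=U_-/U_+=(3\gamma-1)/(\gamma+1)$, and since $\delta=2(r-1)/(r+1)$ this is exactly $\delta=\delta_{\mathrm{spike}}=(\gamma-1)/\gamma$. When $U^\ast\in(U_+,U_-)$ the parabola attains its vertex in the interior of the swept interval, producing the single interior absolute maximum (the Zel'dovich spike), located where $U=U^\ast$ (a regular point or, if the jump straddles $U^\ast$, the jump point). The role of the restriction $\gamma<3$ is that the threshold $r_{\mathrm{spike}}=(3\gamma-1)/(\gamma+1)$ lies below the maximal compressive ratio $r_{\max}=(\gamma+1)/(\gamma-1)$ — equivalently $\delta_{\mathrm{spike}}<\delta_{\max}=2/\gamma$ — if and only if $\gamma<3$, so that the threshold is actually realized by admissible shocks.

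For statement {\bf i} the key is the sonic fold. A continuous profile must cross $U=U_s$, and since $\Phi'(U_s)=0$, finiteness of $U'=\tau^2(n-g(\theta(U)))/\Phi'(U)$ forces the orbit to meet the sonic line exactly at $n=g(\theta(U_s))$; failing this, $U'$ blows up and the weak solution must insert a single compressive sub-shock joining the two $U$-values with a common value of $\Phi$ (equivalently, symmetric about $U_s$, as $n\in C^1$ keeps $n'=\Phi(U)$ continuous). Tracking the subsonic branch shows the marginal case is $\theta_+=\theta(U_s)$, i.e. the downstream temperature equals the sonic temperature; using $U^\ast<U_s$ and the symmetry of $\theta(\cdot)$ about $U^\ast$ this reads $U_++U_s=2U^\ast$, which together with $U_-+U_+=2U_s$ gives $r=2\gamma-1$, hence $\delta=\delta_{\mathrm{jump}}=2(\gamma-1)/\gamma$; as before this value is admissible ($r<r_{\max}$, i.e. $\delta_{\mathrm{jump}}<\delta_{\max}$) exactly when $\gamma<2$. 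The main obstacle is this last step: proving that for $\theta_+\le\theta(U_s)$ (that is, $\delta\ge\delta_{\mathrm{jump}}$) no smooth connecting orbit survives is a global statement about the reduced flow, requiring control of the sign of $n-g(\theta(U))$ and of the orbit's position relative to the sonic line all the way from $U_-$ to $U_+$, rather than the purely local parabola bookkeeping that suffices for monotonicity and for the spike threshold.
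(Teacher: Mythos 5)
Your route is essentially the paper's own: reduce to the planar $(U,n)$ system (the paper's \eqref{redUn}, normalized to \eqref{std}), obtain monotonicity of $\rho$ and $u$ from monotonicity of $U$, read the spike threshold off the location of the vertex $U^\ast=\tfrac{\gamma+1}{2\gamma}U_c$ of the concave parabola $\theta=\theta(U)$ relative to the interval $[U_+,U_-]$ swept monotonically by $U$, and read the jump threshold off the comparison between the sonic temperature $\theta(U_c)$ and $\theta_+$. All your algebra checks out against the paper's computations ($U^\ast\le U_+\iff\delta\le(\gamma-1)/\gamma$ is the paper's $x_c\ge 1$; $\theta_+\le\theta(U_c)\iff\delta\ge 2(\gamma-1)/\gamma$ is the paper's $G(0)\ge 1$; the roles of $\gamma<3$ and $\gamma<2$ as admissibility of the thresholds agree). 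The sign conditions you invoke ``on the connecting orbit'' ($n>g(\theta(U))$ on the supersonic branch, $n<g(\theta(U))$ on the subsonic one) are exactly Lemma \ref{lem:stabunstab}, i.e.\ part of the construction in Theorem \ref{thm:existeuler}, so citing them is legitimate.

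The one genuine gap is the step you yourself flag in part (i): you identify $\theta_+=\theta(U_c)$ as the marginal case but do not prove that $\theta_+\le\theta(U_c)$ actually excludes every continuous connection. This is needed, but it is a one-line global argument, not the delicate tracking you anticipate, and it is precisely step 4 of the paper's Proposition \ref{prop:existence}. Along the connecting orbit $dn/d\xi=\sigma_s f(U)>0$ strictly for $U\in(U_+,U_-)$, so $n$ is strictly increasing in $\xi$; tracing the stable manifold of $(U_+,g(\theta_+))$ backward into the subsonic region, $n$ therefore stays strictly below its limit $g(\theta_+)$. A continuous crossing of the sonic line would require the orbit to arrive at $U=U_c$ with $n=g(\theta(U_c))$ (otherwise $dU/d\xi=\tau^2\sigma_s^{-1}(n-g(\theta(U)))/f'(U)$ blows up, since $f'(U_c)=0$), but $n<g(\theta_+)\le g(\theta(U_c))$ when $\delta\ge\delta_{\mathrm{jump}}$ --- a contradiction, so a jump is forced. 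With that line added, your argument for part (i) is complete; parts on $\rho$, $u$ and on the spike are already complete and coincide with the paper's.
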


As a consequence of the entropy conditions, the value of $\delta$ belong always to the interval
$(0,2/\gamma)$.
For $\gamma\in(1,3)$, there holds $0<\delta_{\textrm{spike}}<\delta_{\textrm{jump}}<2/\gamma$,
hence, depending on the range of values of the parameter $\delta$, we can distinguish three
different type of temperature profiles: monotone and continuous, non-monotone and continuous,
non-monotone and discontinuous. 

The smallness of the parameter $\delta$ is not equivalent to the one 
of the shock stength $|(\rho_+, U_+, \theta_+)-(\rho_-, U_-, \theta_-)|$.
Indeed, from the Rankine--Hugoniot relations, we infer
\[
	[\rho]=\frac{4\,\rho_\pm U_\pm}{4-\delta^2}\,\frac{\delta^2}{[u]},
		\qquad
	[\theta]=\frac{\gamma-1}{\gamma\,R}\,\frac{[u]^2}{\delta}
\]
and thus the small shock assumption corresponds to the requests 
\[
	[u]^2\ll \delta\ll |[u]|^{1/2}\ll 1.
\]
As showed in \cite{LinCoulGoud06, LMS2}, in this limiting regime, the profile is smooth.
Theorem \ref{thm:qualitative} gives the necessary condition $\delta<\delta_{\textrm{jump}}$ 
for such regularity.
An additional condition is also obtained during the proof, see \eqref{condspiral}, and not
reported here because of its scarce readability.
\vskip.25cm

With respect to previous rigorous results on radiative shocks Theorems \ref{thm:existeuler}
and \ref{thm:qualitative} can be considered as conspicous improvements.
By restricting the attention to a specific system, we can implement a strategy closely
resembling the one considered in \cite{LMS1, LMS2} passing from a local study
for small shocks to a global analysis for any possible shock size.
With respect to \cite{CGLL}, where the case of large shocks is considered, in addition
to the removal of the size assumption, we reduce drastically the assumption on the 
data of the problem. 

As usual, the proof is based on the analysis of the system of ordinary differential equations
obtained by looking for traveling wave solutions to \eqref{radeuler}.
Taking advantage of the conserved quantity, such fifth-order system can be reduced to 
a second-order one.
The main original ingredient in the proof resides in the choice of the variable selected for 
the reduction.
The analysis in \cite{CGLL, HeasBald63, KawaNish99, LinCoulGoud06} is based on the
study of the second-order differential equation solved by the velocity variable $u$;
differently, in \cite{LMS1, LMS2}, it is preferred to consider the second-order equation 
for the radiation variable $n$.
Here, we decide for the somewhat intermediate approach given by considering the 
first-order system for velocity $u$ and radiation $n$. 
Such system is transformed in the standard form, that is common to the one valid
for the Hamer model, and the subsequent analysis consists in a detailed study of such
a prototypical system, for which the existence of a heteroclinic orbit, possibly discontinuous
is proved (see Proposition \ref{prop:existence}).
Theorems \ref{thm:existeuler} and \ref{thm:qualitative} descend from Proposition \ref{prop:existence}
and the construction of the heteroclinic orbit for the reduced system.

The paper is organized as follows.
In Section 2, we show how to reduce the original problem for the radiation hydrodynamics
model \eqref{radeuler} to a standard form. 
For completeness, we also present how the same strategy operates in the case of the Hamer model.
Section 3 is devoted to the analysis of the system in standard form and to the construction of 
heteroclinic orbits connecting the two singular points.
In Section 4, we resume to the original model, interpreting the construction of the previous Section
in term of the hydrodynamics variables.
Finally, we draw conclusions in Section 5, pointing at some possible appealing directions
for future research in the subject.

\section{Scaling down to the reduced system}\label{sec:scaling}

Due to the conservation structure, the system of ordinary differential equations
for the profile of the radiative shock under investigation can be reduced to a 
first order system for a two-dimensional variable.
The aim of this Section is to show that such a reduced system has the form 
\begin{equation}\label{std}
		\frac{dx}{d\xi} =\frac{1}{\nu}\,\frac{G(x)-y}{x},\qquad
		\frac{dy}{d\xi}=\frac{1}{2}\,\nu\,(1-x^2).
\end{equation}
for an appropriate function $G$ and positive parameter $\nu$.
The same procedure can be applied to a simpler system given by the coupling 
of a scalar conservation law with an elliptic equation.
For pedagogical reasons, we first present this simpler situation and postpone 
the discussion of the Euler system with radiation \eqref{radeuler}.
Since the two derivations of \eqref{std} are independent, the reader
may skip the first part and move directly to the case of radiation hydrodynamics.

\subsection*{Hamer model}
Given smooth functions $f$ and $g$, let us consider the hyperbolic-elliptic system,
known as {\sf Hamer model} (see \cite{Hame71} and descendants)
\begin{equation}\label{hamer}
	\left\{\begin{array}{l}
		\partial_t u+\partial_x\bigl(f(u)-\partial_x n\bigr)=0,\\
		\partial_x^2 n - n + g(u) = 0.
	\end{array}\right.
\end{equation}
For convex fluxes $f$, the hyperbolic scalar conservation law obtained by disregarding the coupling term
$\partial_x n$  possesses shock waves connecting states $u_\pm$ if and only if $u_+<u_-$.
Such waves have a piecewise constant profile with a single jump point and
propagates with a speed $c$ that is given by the Rankine--Hugoniot condition, {\it viz.}
\begin{equation}\label{rh}
	c=\frac{[f(u)]}{[u]}=\frac{f(u_+)-f(u_-)}{u_+-u_-}
\end{equation}
In analogy, we look for traveling fronts $(u,n)=(u,n)(x-ct)$ that solves \eqref{hamer}
and satisfy the asymptotic conditions
\[
	u(\pm\infty)=u_\pm\qquad n(\pm\infty)=g(u_\pm),\qquad \partial_x n(\pm\infty)=0.
\]
Apart for the requests  $u_+<u_-$ and $c$ given by \eqref{rh}, we also assume
\begin{equation}\label{hypg}
	[g]:=g(u_+)-g(u_-)<0.
\end{equation}
Setting $\xi=x-ct$, the couple $(u,n)$ satisfies an algebraic-differential system
\begin{equation}\label{burgers3d}
		m=f(u)-f(u_\pm)-c(u-u_\pm),\qquad
		\frac{dn}{d\xi}=m,\qquad
		\frac{dm}{d\xi} = n - g(u),
\end{equation}
to be interpreted as a two-dimensional dynamical system along a surface in 
the three-dimensional space $(u,n,m)$.

There are different possible ways to deal with \eqref{burgers3d}, corresponding to
different choices of unknown.
The approach in \cite{SchoTadm92, KawaNish99} consists essentially in deriving a second order
differential equation for the variable $u$, while the strategy used in \cite{LMS1, LMS2}, dictated by the 
better regularity properties of the variable $n$, corresponds to consider the couple $(n,m)$.
Differently, we deal with \eqref{burgers3d} in a way that is somewhat in between the two and we look
for a reduced system for the couple $(u,n)$.
This method avoids the necessity to invert the relation $m=f(u)-f(u_\pm)-c(u-u_\pm)$, but it has the
drawback of the possible presence of jumps of the trajectory.

Differentiating the first equation, we get
\[
		\frac{dm}{d\xi}=\left(f'(u)-c\right)\frac{du}{d\xi},
\]
so that system \eqref{burgers3d} is completely described by the reduced system
\[
		\frac{du}{d\xi} = \frac{n - g(u)}{f'(u)-c},\qquad
		\frac{dn}{d\xi}=f(u)-f(u_\pm)-c(u-u_\pm).
\]
In the case of Burgers equation, i.e. $f(u)=\frac12\,u^2$, $c=(u_++u_-)/2$,
\begin{equation}\label{burgers2d}
		\frac{du}{d\xi} = \frac{n - g(u)}{u-c},\qquad
		\frac{dn}{d\xi}=\frac{1}{2}(u-u_-)(u-u_+).
\end{equation}
Introducing the variable $(x,y)$ related with $(u,n)$ by
\[
	u=\frac{1}{2}\Bigl\{[u]\,x+u_-+u_+\Bigr\}
		\qquad\textrm{and}\qquad 
	n=\frac{1}{2}\Bigl\{[g]\,y+g(u_-)+g(u_+)\Bigr\},
\]
where $[u]=u_+-u_-<0$, system \eqref{burgers2d} takes the form \eqref{std} where
\[
	\nu:=-\frac12\,\frac{[u]^2}{[g]}\qquad\textrm{and}\qquad
	G(x):=\frac{2(g\circ u)(x)-g(u_-)-g(u_+)}{[g]}.
\]
The function $G$ and the parameter $\nu$ have to be regarded as functions of the asymptotic
states $u_\pm$ or, equivalently, of the values $[u]$ and $c$.
By definition, for any choice of $u_\pm$ there hold $G(\pm 1)=\pm 1$.

\begin{example}\label{ex:gburgers}\rm
For a linear term $g$, {\it viz.} $g(u)=\sigma u$ for some $\sigma>0$, the expressions
for $\nu$ and $G$ are
\[
	\nu=-\frac1{2\sigma}\,[u]
	\qquad\textrm{and}\qquad
	G(x)=x.
\]
In the quadratic case, $g(u)=\sigma\,u^{2}$, $\sigma>0$, we obtain
\[
	\nu=-\frac{1}{4\sigma}\,\frac{[u]}{c}\qquad\textrm{and}\qquad
	G(x)=x-\frac{1}{4}\,\frac{[u]}{c}(1-x^2).
\]
where $c=\frac12(u_++u_-)$.
In this case, both $\nu$ and $G$ are homogeneous with respect to the couple $([u],c)$
and thus system \eqref{std} does not vary if the ratio $[u]/c$ is kept fixed.

In general, for $g(u)=\sigma\,u^{\alpha}$ for some exponent $\alpha>0$,
the value $[g]$ is homogeneous of degree $\alpha$ with respect to $[u]$ and $c$,
and, as a consequence, the parameter $\nu$ is a homogeneous function of degree
$2-\alpha$ with respect to the couple $([u],c)$ and $G$ is homogeneous of degree 0.
Explicit expressions for the corresponding $\nu$ and $G$ does not seem
particularly significant.
\end{example}

Solutions to system \eqref{std} may possess eventual discontinuity.
At such point, $y$ is continuous (as a consequence of the continuity of $n$)
and, denoting by $x_\ell$ and $x_r$ the values of $x$ at the left and at the right
of the jump point, respectively, there hold
\[
	x_\ell<x_r
		\qquad\textrm{and}\qquad
	x_\ell+x_r=0.
\] 
Solutions to \eqref{std} with the above jump conditions will be analyzed in the
Section \ref{sec:reduced}.
\vskip.25cm

In the remaining part of the present Section, we show how system \eqref{std} emerges also
when looking for traveling wave solutions fot the radiating Euler system \eqref{radeuler}.

\subsection*{Radiation hydrodynamics}
Next, we consider the hyperbolic-elliptic system \eqref{radeuler}
describing the evolution of a compressible fluid under the effect of radiation.
Setting $\sigma_s^{-1}=0$, the first three equations in system \eqref{radeuler} reduce 
to a standard hyperbolic model for compressible fluids.
Such system possesses shock solutions, i.e. traveling wave solutions $W=W(x-ct)$
with piecewise constant profile $W$ of the form
\[
	W(\xi)=W_-\chi_{{}_{(-\infty,0)}}(\xi)+W_+\chi_{{}_{(0,+\infty)}}(\xi).
\]
for appropriate states $W_\pm$ and parameter $c$.
The shock speed $c$ is related with the states $W_\pm$ by means
of the Rankine--Hugoniot condition.
Precisely, setting $U=u-c$, there hold
\[
	\left\{\begin{aligned}
		&\rho_-\,U_-=\rho_+\,U_+,\\
		&\rho_-\,U_-^2+p_-=\rho_+\,U_+^2+p_+,\\
		&\rho_-\,U_-\left(\frac12\,U_-^2+e_-\right)+p_-\,U_-
			=\rho_+\,U_+\left(\frac12\,U_+^2+e_+\right)+p_+\,U_+
	\end{aligned}\right.
\]
In the case \eqref{specialpeg}, if $\rho_\pm U_\pm\neq 0$, 
the values $\theta_\pm$ satisfy the linear system
\[
	U_+\theta_--U_-\theta_+=\frac{1}{R}\,U_-U_+[u],\qquad
	\theta_--\theta_+=\frac{\gamma-1}{2\,\gamma\,R}\,(U_++U_-)[u],
\]
hence, they can be written as explicit functions of $U_\pm$
\[
	\theta_-=\frac{1}{\gamma\,R}\Bigl(U_c-\frac12[u]\Bigr)\Bigl(U_c+\frac{\gamma}2[u]\Bigr),
		\qquad
	\theta_+=\frac{1}{\gamma\,R}\Bigl(U_c+\frac12[u]\Bigr)\Bigl(U_c-\frac{\gamma}2[u]\Bigr)
\]
where $U_c:=\frac{1}{2}(U_-+U_+)$.
Note also that, if the jump $[u]$ is strictly negative, then the jumps $[\rho]$ and $[\theta]$
are both strictly positive.

Still taking advantage of \eqref{specialpeg}, 
entropy conditions in the case of 1-shocks require $U_\pm, \theta_\pm$ to be such that
\begin{equation}\label{entropy1}
	\sqrt{\gamma\,\theta_-}<U_-,
		\qquad
	0<U_+<\sqrt{\gamma\,\theta_+},
\end{equation}
which implies $U_+<U_-$, since $[\theta]$ and $[u]$ have opposite sign.

For later use, let us set
\begin{equation}\label{ABC}
	A:=\rho_\pm U_\pm, \qquad
	B:=\rho_\pm\,U_\pm^2+p_\pm\qquad
	C:= \rho_\pm\,U_\pm\left(\frac12\,U_\pm^2+e_\pm\right)+p_\pm\,U_\pm.
\end{equation}
or, equivalently, 
\[
	A:=\rho_\pm U_\pm, \qquad
	B:=\frac{\gamma+1}{2\gamma}\,A\,(U_-+U_+)\qquad
	C:=\frac{\gamma+1}{2(\gamma-1)}\,A\,U_-\,U_+.
\]
having used \eqref{specialpeg}.
\vskip.25cm

We are interested in determining the existence and the internal structure of
traveling wave solutions 
\[
	(\rho, u, \theta, n)=(\rho, u, \theta, n)(x-ct)
\]
solutions to \eqref{radeuler} such that
$(\rho,u,\theta,n,\partial_x n)(\pm\infty)=(\rho_\pm,u_\pm,\theta_\pm,g(\theta_\pm),0)$.
Plugging the traveling wave ansatz into the system, we get the system of differential equation
\begin{equation}\label{twode}
	\begin{aligned}
		&\frac{d}{d\xi}\left(\rho\,U\right)=0,	
		&\qquad &\frac{d}{d\xi}\left(\rho\,U^2+p\right)=0,\\
		&\frac{d}{d\xi}\left(\rho\,U\left(\frac12\,U^2+e\right)+p\,U
			-\frac{1}{\sigma_s}\,\frac{dn}{d\xi}\right)=0,
		&\qquad &\frac{d^2 n}{d\xi^2}=\tau^2\bigl(n - g(\theta)\bigr)
	\end{aligned}
\end{equation}
to be satisfied in the regions where the wave profile is smooth.

The conservative form of the first two equations implies that
\[
	\rho\,U=A\qquad\textrm{and}\qquad \rho\,U^2+p=B
\]
all along the trajectory for any weak solution to \eqref{twode}
where $A$ and $B$ are given in \eqref{ABC}.
Thus, there hold
\begin{equation}\label{rhoepidiu}
	\rho=\rho(U)=\frac{A}{U},\qquad p=p(U)=B-A\,U
\end{equation}
and system \eqref{twode} reduces to the algebraic-differential system
\begin{equation}\label{twode2}
		m=\sigma_s f(U),\qquad
		\frac{dn}{d\xi}=m,\qquad 
		\frac{dm}{d\xi}=\tau^2\bigl(n - (g\circ\theta)(U)\bigr)
\end{equation}
where, for $A, B, C$ as in \eqref{ABC},
\begin{equation}\label{generalf}
	f(U):=-\frac12\,A\,U^2+A\,e(U)+B\,U-C,
\end{equation}
and $e(U)=e(\rho(U),\theta(U))$ with $\rho(U)$ and $\theta(U)$ are obtained from 
\eqref{specialpeg} and \eqref{rhoepidiu}.
System \eqref{twode2} describes a two-dimensional dynamical system in the three-dimensional
space $(U,n,m)$ along the manifold $\Sigma$ determined by the algebraic relation $m=\sigma_s f(U)$.
Using the special form for pressure $p$ and internal energy $e$ given in \eqref{specialpeg}, the function $f$
can be rewritten as
\begin{equation}\label{specialf}
 f(U)=-\kappa\,(U-U_+)(U-U_-)
 	\qquad\textrm{where}\quad 
 	\kappa:=\frac{\gamma+1}{2(\gamma-1)}\,A.
\end{equation}
If the profile has a jump point at some point, the function $n$ and its first derivative are forced to be continuous
at such point, since the last equation in \eqref{twode2} forces the second derivative of $n$ to be bounded
and measurable.
As a consequence, denoting by $\rho_\ell, U_\ell, \theta_\ell$ and $\rho_r, U_r, \theta_r$ the values
at the left-/right-hand side of the discontinuity, the relation $f(U_\ell)=f(U_r)$ holds together with the
admissibility conditions that read as, in the case of 1-shocks,
\[
	\sqrt{R\,\gamma\,\theta_\ell}<U_\ell,
		\qquad
	0<U_r<\sqrt{R\,\gamma\,\theta_r}
\]
Because of \eqref{specialf}, jumps are admissible if and only if
\begin{equation}\label{entropyU}
	U_c<U_\ell<2U_c \qquad\textrm{where}\quad
	U_c:=\frac{U_++U_-}{2}
\end{equation}
the corresponding value $U_r$ being given by $2U_c-U_\ell$.
\vskip.25cm

There are different possible ways to deal with \eqref{twode2}, corresponding to
different choices of unknown.
As for the Hamer model, two main approaches have been used:
in \cite{HeasBald63, LinCoulGoud06, CGLL} a second order differential
equation for the variable $U$ is derived, in \cite{LMS1, LMS2} it has been
considered the dynamics for the couple $(n,m)$, because of the greater regularity
of the variable $n$.
Here, we proceed by deriving a system for the couple $(U,n)$, hence, with the possibility
of trajectories that are discontinuous at some point in the first component of the unknowns.

Differentiating with respect to $\xi$ the equation defining the surface $\Sigma$ and eliminating
the variable $m$, we get the reduced two-dimensional system
\begin{equation}\label{redUn}
	\frac{dU}{d\xi} =\frac{\tau^2}{\sigma_s}\,\frac{n-(g\circ\theta)(U)}{f'(U)},\qquad
	\frac{dn}{d\xi} =\sigma_s\,f(U).
\end{equation}
Thanks to \eqref{specialpeg}, function $f$ is the second-order polynomial given in \eqref{specialf}
and  $f'(U)=-2\kappa(U-U_c)$.
The final step consists in introducing new coordinates to transform the system in the
standard form \eqref{std}.
\vskip.25cm

Introducing the variable $(x,y)$ related with $(U,n)$ by
\[
	U=U(x):=\frac{1}{2}\Bigl\{[u]\,x+U_-+U_+\Bigr\}
		\quad\textrm{and}\quad 
	n=n(y):=\frac{1}{2}\Bigl\{[g]\,y+g(\theta_-)+g(\theta_+)\Bigr\},
\]
system \eqref{redUn} becomes
\[
	\frac{dx}{d\xi} =\frac{\tau^2\,[g]}{\sigma_s\,\kappa\,[u]^2}\,\frac{G(x)-y}{x},\qquad
	\frac{dy}{d\xi} =\frac{\sigma_s\,\kappa\,[u]^2}{2[g]}\,\nu(1-x^2)
\]
where
\begin{equation}\label{defG}
	G(x):=\frac{1}{[g]}\bigl(2\,(g\circ\theta\circ U)(x)-g(\theta_-)-g(\theta_+)\bigr)
\end{equation}
By rescaling the variable $\xi$ by setting $\xi=\tau\,\zeta$, we end up with the system \eqref{std} where
\begin{equation}\label{defnu}
	\nu:=\frac{\kappa\,\sigma_s\,[u]^2}{\tau\,[g]}.
\end{equation}
Both function $G$ and parameter $\nu$ depends on the values of the asymptotic states
of the profile and thus can be regarded as functions of the parameters $[u]$ and $U_c$.

Let us also observe that the function $\theta\circ U=(\theta\circ U)(x)$ is given by
\[
	(\theta\circ U)(x)
			=\frac{1}{R}\left(\frac{\gamma+1}{\gamma}\,U_c-U\right)U
		=\frac{1}{\gamma\,R}\left(U_c+\frac12[u]\,x\right)\left(U_c-\frac12\,\gamma\,[u]\,x\right);
\]
thus, there holds
\[
	\frac{d}{dx}(\theta\circ U)(x)
		=-\frac{[u]^2}{2R}\left(x-x_c\right)
	\qquad\textrm{where }
		x_c:=-\frac{\gamma-1}{\gamma}\frac{U_c}{[u]}>0.
\]
As a consequence, if $g$ is strictly increasing, the function $G$ has the same monotonicity
of $\theta\circ U$ and it is strictly increasing in $[-1,1]$ if and only if $x>1$, that is if and only if
\[
	\bigl|u_+-u_-\bigr|\leq \frac{\gamma-1}{\gamma}\,U_c \, .
\]
Otherwise, $G$ is strictly increasing in $[-1,x_c]$ and strictly decreasing in $[x_c,1]$.

\begin{example}\label{ex:geuler}\rm
In the case $g(\theta)=\sigma\,\theta$ with $\sigma>0$, the parameter $\nu$ and 
the function $G$ can be computed explicitly
\[
	\begin{aligned}
	\nu&=\frac{\kappa\,\sigma_s}{\tau\,\sigma}\,\frac{[u]^2}{[\theta]}
		=\frac{\gamma(\gamma+1)\,R}{2(\gamma-1)^2}\,
			\frac{\sigma_s\,A}{\tau\,\sigma}\,\delta,\\
	G(x)&=\frac{2(\theta\circ U)(x)-\theta_+-\theta_-}{\theta_+-\theta_-}
		=x+\frac{\gamma}{2(\gamma-1)}\,\delta(1-x^2)
	\end{aligned}
\]
Thus, the structure is essentially the same of the Hamer model \eqref{hamer}
with a quadratic function $g$ (see Example \ref{ex:gburgers}).

Higher powers in the temperature dependence of function $g$, i.e. $g(\theta)=\sigma\,\theta^\alpha$
with $\sigma>0$ and $\alpha$ a positive integer, give raise to complicate formulas, whose explicit
expression is not particularly significant.
It is relevant to observe that, being $\theta\circ U$ a homogeneous function of degree 2 with respect
to $[u]$ and $U_c$, the value $[g]$ is homogeneous of degree $2\alpha$ and thus $\nu$ is homogeneous
of degree $2(1-\alpha)$ and $G$ is homogeneous of degree $0$ with respect to the same variables.
In particular, when the ratio
\[
	\delta:=-\frac{[u]}{U_c}=\frac12\,\frac{U_--U_+}{U_-+U_+}
\]
is kept fixed the function $G$ does not change and the parameter $\nu$ decreases
as $-[u]$ increases if $\alpha>1$.
\end{example}

An eventual discontinuity in the trajectory keeps $y$ fixed;
moreover, if $x_\ell$ and $x_r$ denote the values at the left and at the right of the jump, 
then $x_r=-x_\ell$ and condition \eqref{entropyU} translates into
\begin{equation}\label{entropyx}
	\frac{U_++U_-}{U_+-U_-}<x_\ell<0.
\end{equation}
Note that the value $(U_++U_-)/(U_+-U_-)$ is always strictly smaller than $-1$,
so that jumps are always alloweded if $x\in[-1,0)$.

\begin{remark}\rm
The analysis is based on the choice \eqref{specialpeg}, that playes a key-r\^ole
in determining the specific expression \eqref{specialf} for the function $f$.
For more general expression of pressure $p$ and internal energye $e$, 
all of the subsequent analysis can still be performed  if the function $f$,
given by formula \eqref{generalf}, is strictly concave.
\end{remark}

\section{Analysis of the reduced system}\label{sec:reduced}

In this Section, we consider piecewise smooth solutions to the system
\begin{equation}\label{std2}
		\frac{dx}{d\zeta} =\frac{1}{\nu}\,\frac{G(x)-y}{x},\qquad
		\frac{dy}{d\zeta}=\frac{1}{2}\,\nu\,(1-x^2).
\end{equation}
with $\nu>0$ and $G$ smooth and such that $G(\pm 1)=\pm 1$.
At any eventual jump point $\xi$, we assume that the couple $(x,y)$
possesses left/right limits, denoted by $(x_\ell, y_\ell)$ and $(x_r,y_r)$,
and that 
\[
	x_\ell<x_r,\qquad 
	x_\ell+x_r=0,\qquad
	y_\ell-y_r=0
\]
System \eqref{std2} has two singular points, $P_-:=(-1,-1)$ and $P_+:=(+1,+1)$, and
we are interested in  the existence of a heteroclinic orbit connecting such that 
$(x,y)(\pm\infty)=P_\pm$.
Since both critical points $P_\pm$ are saddles, such orbit is obtained by matching the
unstable one-dimensional manifold of $P_-$ with the stable one-dimensional manifold
of $P_+$ (see Lemma \ref{lem:stabunstab}).

\begin{figure}[ht]\label{fig:pianofasi}
\includegraphics[width=8cm]{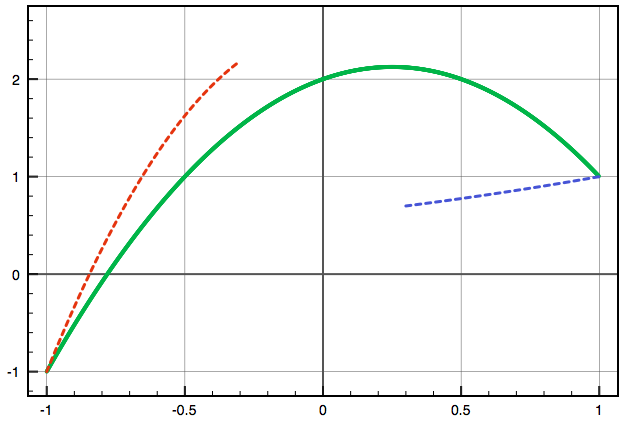}
\caption{\footnotesize The phase plane $(x,y)$ with the graph of function $G$ (continuous line) 
and the unstable/stable manifold of $P_-=(-1,-1)/P_+=(+1,+1)$  (dashed lines).}
\end{figure}

The matching may happen either in a continuous or in a discontinuous way, the former
case being related with the nature of the system at the singular line $x=0$
(see Lemma \ref{lem:critical}).

\begin{lemma}\label{lem:stabunstab}
For any $\nu>0$, the critical points $P_\pm$ are saddles and, denoted by $\mathcal{U}_-$ 
the part of unstable manifold of $P_-$ in $\{x>-1\}$ and by $\mathcal{S}_+$ the part of the 
stable manifold of $P_+$  in $\{x<1\}$,  
there exist functions $\phi_-=\phi_-(x)$ defined in $[-1,0]$ and  $\phi_+=\phi_+(x)$ defined in $[0,1]$
both differentiable and strictly monotone increasing, such that
\begin{align*}
	&\{(x,y)\in\mathcal{U}_-\,:\,x\in [-1,0)\}=\{(x,y)\,:\,x\in [-1,0),\, y=\phi_-(x)\}\\
	&\{(x,y)\in\mathcal{S}_+\,:\,x\in (0,+1]\}=\{(x,y)\,:\,x\in (0,+1],\, y=\phi_+(x)\}
\end{align*}
Moreover, $\phi_+(x)<G(x)$ for any $x\in(-1,0)$ and $\phi_-(x)>G(x)$ for any $x\in(0,1)$.
\end{lemma}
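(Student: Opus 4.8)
The plan is to analyze the local structure of the two singular points and then track the relevant invariant manifolds using monotonicity arguments dictated by the sign structure of the vector field. First I would linearize the system \eqref{std2} at each critical point. Writing the right-hand side as $F(x,y)=\bigl(\tfrac{1}{\nu}(G(x)-y)/x,\ \tfrac{1}{2}\nu(1-x^2)\bigr)$, I would compute the Jacobian at $P_\pm=(\pm1,\pm1)$. Note that near $P_+$ the first component has the form $\tfrac{1}{\nu}(G(x)-y)/x$ with $x$ close to $+1$, so the denominator is smooth and nonzero; using $G(\pm1)=\pm1$ I would get that the linearization at $P_\pm$ has determinant equal to $\mp\tfrac{1}{\nu}\cdot\nu\cdot(\pm1)$ type expression, which works out to a negative value. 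The key point to verify is that $\det DF(P_\pm)<0$, which forces the eigenvalues to be real and of opposite sign, establishing that both $P_\pm$ are saddles for every $\nu>0$. This handles the first assertion.

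Next I would extract the one-dimensional unstable manifold $\mathcal{U}_-$ of $P_-$ and the stable manifold $\mathcal{S}_+$ of $P_+$ from the stable/unstable manifold theorem, which guarantees each is a smooth curve tangent to the corresponding eigendirection. To represent these as graphs $y=\phi_\mp(x)$, I would examine the slope $dy/dx$ along trajectories, namely
\[
	\frac{dy}{dx}=\frac{\nu^2 x(1-x^2)}{2\,(G(x)-y)}.
\]
The strategy is to show that along $\mathcal{U}_-$ in the region $x\in[-1,0)$ one stays in the zone where $x<0$, $1-x^2>0$, and $G(x)-y<0$ (i.e. the manifold lies below the graph of $G$), so that the numerator and denominator are both negative and the slope $dy/dx$ is strictly positive; this simultaneously gives that $\phi_-$ is a well-defined increasing function and that $\phi_-(x)>G(x)$ is \emph{not} the correct sign here, so care is needed. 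I would instead set up an invariant-region argument: identify the nullclines $y=G(x)$ (where $dx/d\zeta=0$) and $x=\pm1$ (where $dy/d\zeta=0$), and show that the wedge bounded by these curves and the line $x=0$ is forward/backward invariant for the appropriate manifold, pinning the monotonicity and the relative position to the graph of $G$. The correct containment statements, $\phi_+(x)<G(x)$ on $(-1,0)$ and $\phi_-(x)>G(x)$ on $(0,1)$, should emerge from tracking on which side of the $G$-nullcline each manifold departs its saddle, determined by the sign of the unstable/stable eigenvector.

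The main obstacle I anticipate is the behavior of the manifolds as $x\to0^{\mp}$, i.e. at the singular line $x=0$ where the first component of the vector field blows up. There the reparametrized flow degenerates, and I must show that $\phi_\mp$ extends continuously (indeed differentiably) up to $x=0$ with a finite limiting value. The plan is to rewrite the orbit equation as the scalar ODE $\phi'(x)=\nu^2 x(1-x^2)/\bigl(2(G(x)-\phi(x))\bigr)$ and argue that, since the manifold stays strictly on one side of the $G$-nullcline on $(-1,0)$ (so $G(x)-\phi$ is bounded away from $0$), the right-hand side remains bounded and even vanishes like $x$ as $x\to0^-$; this yields a finite limit $\phi_-(0)$ and the differentiable extension claimed on the closed interval $[-1,0]$, and symmetrically for $\phi_+$ on $[0,1]$. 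Establishing that $G(x)-\phi_\mp(x)$ does not vanish prematurely on the open interval, so that the graph representation never breaks down before reaching $x=0$, is the delicate step and is precisely what the invariant-wedge argument must secure; monotonicity of $\phi_\mp$ then follows from the established sign of $\phi_\mp'$, and the strict inequalities comparing $\phi_\pm$ with $G$ follow from the strict invariance.
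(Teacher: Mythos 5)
Your proposal follows essentially the same route as the paper: negative determinant of the Jacobian at $P_\pm$ to get saddles, eigendirection analysis to decide on which side of the nullcline $y=G(x)$ each manifold leaves its saddle, forward/backward invariance of the wedges $\Omega_\mp$, the orbit-slope formula $dy/dx=\nu^2x(1-x^2)/\bigl(2(G(x)-y)\bigr)$ for strict monotonicity, and a no-vertical-asymptote argument to extend the graphs up to $x=0$. The one point to settle is your sign hesitation: $G(x)-y<0$ means the manifold lies \emph{above} the graph of $G$, and the containments the argument actually delivers are $\phi_-(x)>G(x)$ on $(-1,0)$ and $\phi_+(x)<G(x)$ on $(0,1)$ (the intervals in the lemma's final sentence are interchanged relative to the domains of $\phi_\mp$).
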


\begin{proof}
The jacobian matrices of \eqref{std2} at $P_\pm$
\[
	\frac{1}{\nu}\begin{pmatrix}
		\pm G'(\pm 1)	&	\mp 1 \\
		\mp \nu^2				&	0
	\end{pmatrix},
\]
have both determinant equal to $-1$; hence, both points are saddles.

The manifolds $\mathcal U_-$ and $\mathcal S_+$ have tangent directions given by
the vectors $(\mu_-^u,\nu)$ and $(-\mu_+^s,\nu)$, respectively, where
\[
	\mu_-^u:=\frac{2\nu}{G'(-1)+\sqrt{G'(-1)^2+4\nu^2}},
	\qquad
	\mu_+^s:=-\frac{2\nu}{G'(+1)+\sqrt{G'(+1)^2+4\nu^2}},
\]
Thus, if $G'(-1)\leq 0$, $\mathcal U_-$ lies in the region $\{(x,y)\,:\,y>G(x)\}$
in a neighborhood of $P_-$;
similarly,  if $G'(1)\leq 0$, $\mathcal S_+$ lies in the region $\{(x,y)\,:\,y<G(x)\}$
in a neighborhood of $P_+$.

If $G'(\pm 1)>0$, since $\mu_-^u<\nu/G'(-1)$ and $-\mu_+^s<\nu/G'(+1)$,
for $x\in(-1,1)$, the manifold $\mathcal U_-$ lies in the region $\{(x,y)\,:\,y>G(x)\}$
and the manifold $\mathcal S_+$ in the region $\{(x,y)\,:\,y<G(x)\}$ in a neighborhood
of $P_-$ and $P_+$, respectively.
Since $\Omega_-:=\{(x,y)\,:\,x\in(-1,0), y>G(x)\}$ is positevely invariant and 
$\Omega_+:=\{(x,y)\,:\,x\in(0,1), y<G(x)\}$
is negatively invariant, the $\mathcal U_-$ is described in $\Omega_-$ by a curve $y=\phi_-(x)$ 
and the manifold $\mathcal S_+$ is described in $\Omega_+$ by a curve $y=\phi_+(x)$.
Moreover, both functions are monotone increasing with respect to $x$ since the corresponding
derivatives, given by
\begin{equation}\label{derphi}
	\frac{d}{dx}\phi_\pm(x)=\frac{\nu^2\,x(1-x^2)}{2(G(\phi)-x)},
\end{equation}
do not vanish in the interior of $\Omega_-$ and $\Omega_+$ respectively.
Moreover, because of \eqref{derphi}, the functions $\phi_\pm$ cannot have vertical asymptotes
in the regions $(-1,0)$ and $(0,1)$, respectively;
therefore, they are well defined in $(-1,0)$ and $(0,1)$, respectively.
\end{proof}

For a better understanding of the behavior of the curves $\phi_\pm$ at $x=0$,
we consider the half plane $x>0$ and rescale the independent variable by setting
\[
	x(\zeta)\frac{d}{d\zeta}=\frac{d}{d\eta}
\]
so that \eqref{std2} takes the form
\begin{equation}\label{stdResc}
		\frac{dx}{d\eta} =\frac{1}{\nu}\,\bigl(G(x)-y\bigr),\qquad
		\frac{dy}{d\eta}=\frac{1}{2}\,\nu\,x(1-x^2).
\end{equation}

\begin{lemma}\label{lem:critical}
Assume $G'(0)\neq 0, 2\nu^2$.
The critical point $P_0=(0,G(0))$ of system \eqref{stdResc} is an attractive spiral if $G'(0)<0$,
a repulsive spiral if $0<G'(0)<2\nu^2$ and a source if $2\nu^2<G'(0)$.
In the latter case, there exists $x_+\in(0,1]$ and a monotone increasing function 
$\psi_+=\psi_+(x)$ defined in $[0,x_+]$ such that the fast unstable trajectory of $P_0$ in the region
$\Omega_+=\{x\in(0,1),\,y<G(x)\}$ is given by $(x,\psi_+(x))$.
\end{lemma}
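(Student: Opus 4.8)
The plan is to regard $P_0=(0,G(0))$ as a genuine (hyperbolic) equilibrium of the desingularised system \eqref{stdResc} and to read off its type from the linearisation. First I would compute the Jacobian of the vector field in \eqref{stdResc} at $P_0$, namely
\[
 J_0=\begin{pmatrix} G'(0)/\nu & -1/\nu\\ \nu/2 & 0\end{pmatrix},
\qquad
 \det J_0=\tfrac12,\qquad \operatorname{tr}J_0=\frac{G'(0)}{\nu}.
\]
Since $\det J_0=\tfrac12>0$, the point $P_0$ can never be a saddle. The eigenvalues solve $\lambda^2-(G'(0)/\nu)\,\lambda+\tfrac12=0$, so their common real part has the sign of $G'(0)$, while the sign of the discriminant $(G'(0)/\nu)^2-2$ decides whether they form a complex-conjugate pair or two distinct reals. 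This yields the trichotomy of the statement: a stable (attractive) spiral when $G'(0)<0$ and the discriminant is negative, an unstable (repulsive) spiral when $G'(0)>0$ and the discriminant is negative, and two distinct positive real eigenvalues — a source, i.e. an unstable node — once the discriminant turns positive. The spiral/nodal transition occurs precisely where this discriminant vanishes, and the hypotheses excluding $G'(0)=0$ and $G'(0)$ at the borderline value remove the two non-hyperbolic configurations.

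In the source regime the two eigenvalues $0<\lambda_-<\lambda_+$ are real and positive, and the object to be described is the \emph{strong} (fast) unstable manifold, that is, the unique orbit tangent at $P_0$ to the eigenvector of $\lambda_+$. A direct computation gives this eigenvector as $(1,\nu/(2\lambda_+))$, so the fast orbit leaves $P_0$ with slope $s_+=\nu/(2\lambda_+)$. Using $\lambda_+>\tfrac12\,G'(0)/\nu$ (it is the larger of two numbers summing to $G'(0)/\nu$) together with $G'(0)>\nu$, which holds throughout the nodal regime, I would check that $0<s_+<G'(0)$; consequently the branch $x>0$ of this manifold departs below the graph of $G$ and immediately enters the region $\Omega_+=\{x\in(0,1),\,y<G(x)\}$.

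It then remains to show that, inside $\Omega_+$, this orbit is the graph of a monotone increasing function. Since $G(x)-y>0$ throughout $\Omega_+$, the first equation of \eqref{stdResc} gives $dx/d\eta>0$, so $x$ increases strictly along the orbit, which is therefore a graph $y=\psi_+(x)$ with $\psi_+(0)=G(0)$. Dividing the two equations of \eqref{stdResc},
\[
 \psi_+'(x)=\frac{\nu^2\,x(1-x^2)}{2\bigl(G(x)-\psi_+(x)\bigr)}>0
 \qquad\text{for } x\in(0,1),\ \psi_+(x)<G(x),
\]
so $\psi_+$ is strictly increasing. I would finally set $x_+\in(0,1]$ equal to the supremum of the $x$-values for which the orbit stays in $\Omega_+$: either $x_+=1$ (the orbit runs to $P_+$), or $x_+<1$ and the orbit meets the curve $y=G(x)$, i.e. $\psi_+(x_+)=G(x_+)$ with a vertical tangent. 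In both cases $\psi_+$ is well defined, differentiable and increasing on $[0,x_+]$, which is the assertion.

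The delicate step is the second one: verifying that it is genuinely the fast unstable orbit (not a slow one, nor a trajectory escaping into $\{y>G(x)\}$) that populates $\Omega_+$, which rests entirely on the slope comparison $s_+<G'(0)$, and then pinning down the exit abscissa $x_+$ — in particular ruling out that the orbit could return to the singular line $x=0$ or develop a singularity before reaching either $x=1$ or the graph of $G$. By contrast, the linear classification at $P_0$ is routine once $J_0$ is in hand.
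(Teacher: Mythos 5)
Your proof is correct and follows essentially the same route as the paper's: linearisation at $P_0$ with $\det J_0=\tfrac12$ and $\operatorname{tr}J_0=G'(0)/\nu$, the trichotomy read off from the sign of $G'(0)$ and of the discriminant $G'(0)^2-2\nu^2$, the eigendirection slopes $\nu/(2\lambda_\pm)$ compared against $G'(0)$ via $\lambda_+\lambda_-=\tfrac12$, and monotonicity of $\psi_+$ from the quotient ODE in $\Omega_+$. Your handling of the exit abscissa $x_+$ is in fact slightly more explicit than the paper's, which simply asserts that the fast unstable orbit is an increasing graph while it lies in $\Omega_+$.
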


\begin{proof}
The jacobian matrix at the singular point $P_0:=(0,G(0))$
\[
	\frac{1}{\nu}\begin{pmatrix} G'(0) & -1 \\ \nu^2/2	& 0 \end{pmatrix},
\]
has real eigenvalues if and only if $G'(0)^2-2\nu^2>0$.
In the opposite case, both eigenvalues have real parts of the same sign of $G'(0)$.
If $2\nu^2<G'(0)^2$, the eigendirections are given by $(2\mu_0^\pm,\nu)$
where
\[
	\mu_0^\pm=\frac{G'(0)\pm\sqrt{G'(0)^2-2\nu^2}}{2\nu}
\]
Since
\[
	0<\frac{\nu}{2\mu_0^+}<\frac{\nu}{2\mu_0^-}<G'(0)
\]
the principal directions of the critical point $(0,G(0))$ both lies into the region $y<G(x)$ for $x>0$.
The fast unstable direction, corresponding to the eigenvalue $\mu_0^+$, is uniquely determined
and, while lying in $\Omega_+$, is the graph of a monotone increasing function $\psi_+=\psi_+(x)$.
\end{proof}

\begin{remark}\label{rem:rotated}\rm 
If the function $G$ does not depend on the parameter $\nu$ the vector field in
\eqref{stdResc} is a {\sf rotated vector field} in the sense of Duff \cite{Duff53}
in the regions $\Omega_\pm$, as a consequence of the equality
\[
	\det \begin{pmatrix} F & G \\ \partial_\nu F & \partial_\nu G \end{pmatrix}
		=\frac{1}{\nu}\,(G(x)-y)\,x(1-x^2)
\]
(for the analysis of rotated vector field, see also \cite{Perk93}).
Thus, the graphs of the function $\psi_+$ rotates anticlockwise as $\nu$ increase
and the map $\nu\mapsto x^\nu_+$ is monotone decreasing with respect to $\nu$.
Moreover, there exists a critical value $\bar \nu_+$ such that $x^\nu_+=1$ for $\nu<\bar \nu_+$
and $x^\nu_+<1$ for $\nu>\bar \nu_+$.

From Examples \ref{ex:gburgers} and \ref{ex:geuler}, it is readily seen that the eventuality
of a function $G$ independent on $\nu$ is rarely verified in concrete cases.
Nevertheless, in the case of radiation hydrodynamics, the function $G$ does not vary
if the ratio $\delta=-[u]/U_c$ is kept fixed. 
Thus the properties of the rotated vector fields return to be useful in the analysis of variations
of the heteroclinic orbit connecting $P_-$ and $P_+$ when varying the asymptotic states $U_\pm$
keeping $\delta$ fixed.
\end{remark}

A result analogous to Lemma \ref{lem:critical} holds on the side $x<0$, the main difference being related
to the fact that the rescaling of the independent variable changes the orientation of the trajectories.
In particular, we can state that, for $2\nu^2<G'(0)$, there exists $x_-\in[-1,0)$ and a monotone increasing
function $\psi_-=\psi_-(x)$ defined in $[x_-,0]$ such that the fast stable trajectory of $P_0$ in the
region $\Omega_-=\{x\in(0,1),\,y<G(x)\}$ is given by $(x,\psi_-(x))$.

\begin{proposition}\label{prop:existence}
Let $\nu>0$ and $G\in C^1([-1,1])$ be such that $G(\pm 1)=0$.
Then there exists a heteroclinic orbit connecting $P_-$ with $P_+$.
Such orbit possesses at most a single jump, and such discontinuity is 
actually present if one of the following conditions is satisfied
\begin{equation}\label{jump}
	G(0)\leq -1,\quad\textrm{or}\quad
	1\leq G(0),\quad\textrm{or}\quad
	G'(0)<2\nu^2.
\end{equation}
Finally, if $G'(0)>0$ the trajectory of the heteroclinic orbit is unique.
\end{proposition}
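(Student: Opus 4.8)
The plan is to realise the heteroclinic orbit by matching the two one–dimensional invariant manifolds produced by Lemma \ref{lem:stabunstab}, treating the singular line $x=0$ as the only place where a discontinuity can occur. First I would record the graphs $y=\phi_-(x)$ on $[-1,0]$ and $y=\phi_+(x)$ on $[0,1]$ describing $\mathcal U_-$ and $\mathcal S_+$, together with the containments $\mathcal U_-\subset\Omega_-$ and $\mathcal S_+\subset\Omega_+$. Passing to the limit $x\to 0$ these yield the sandwich
\[
	y_\ell:=\phi_-(0)\ge G(0)\ge \phi_+(0)=:y_r .
\]
Since an admissible jump keeps $y$ fixed and sends $x_\ell$ to $x_r=-x_\ell$, a (possibly discontinuous) connection is obtained as soon as one finds $s\in[0,1]$ with $\phi_-(-s)=\phi_+(s)$: the jump then occurs at $x_\ell=-s$, $x_r=s$, while for $s=0$ the orbit is continuous and passes through the singular point $P_0=(0,G(0))$.

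The heart of the existence argument is the scalar function $H(s):=\phi_-(-s)-\phi_+(s)$ on $[0,1]$. Because both $\phi_\pm$ are strictly increasing (Lemma \ref{lem:stabunstab}), $H$ is strictly decreasing; moreover $H(0)=y_\ell-y_r\ge 0$ by the sandwich, while $H(1)=\phi_-(-1)-\phi_+(1)=-1-1=-2<0$. Hence $H$ has a unique zero $s^\ast\in[0,1)$. This single step simultaneously gives existence of the heteroclinic orbit, the fact that it carries at most one jump (there is only one matching value $s^\ast$, and once the orbit has landed on $\mathcal S_+$ the flow runs monotonically into $P_+$ without returning to $x=0$), and the dichotomy continuous ($s^\ast=0$, forcing $y_\ell=y_r=G(0)$ and passage through $P_0$) versus discontinuous ($s^\ast>0$).

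For the sufficient conditions \eqref{jump} I would show that each of them forces the strict inequality $y_\ell>y_r$, i.e. $s^\ast>0$. If $G(0)\le-1$, then $y_\ell=\phi_-(0)>\phi_-(-1)=-1\ge G(0)\ge y_r$ by strict monotonicity of $\phi_-$; symmetrically $G(0)\ge1$ gives $y_r=\phi_+(0)<\phi_+(1)=1\le G(0)\le y_\ell$. If instead $G'(0)<2\nu^2$, then by Lemma \ref{lem:critical} the point $P_0$ is a spiral, so no trajectory remaining on one side of the graph of $G$ can reach $P_0$ as a monotone graph; in particular $\mathcal U_-$ cannot terminate at $P_0$, whence $y_\ell>G(0)\ge y_r$ (the borderline $G'(0)=0$ being treated directly). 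In all three cases $H(0)>0$, so the jump is genuinely present.

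Finally, for uniqueness when $G'(0)>0$: the manifolds $\mathcal U_-$ and $\mathcal S_+$ are the stable/unstable manifolds of the saddles $P_\mp$ and are therefore unique, and the matching abscissa $s^\ast$ is unique by the strict monotonicity of $H$; it remains only to exclude competing connections that cross the singular line more than once. The hypothesis $G'(0)>0$ guarantees, via Lemma \ref{lem:critical}, that $P_0$ is either a repulsive spiral or a source, hence not attracting; working in the rescaled field \eqref{stdResc}, where a jump of \eqref{std2} corresponds to a transversal crossing of $x=0$ and a continuous passage corresponds to hitting $P_0$, one rules out orbits that accumulate near $P_0$ and oscillate across $x=0$, so the constructed orbit is the only connection. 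I expect this last point — controlling the global behaviour near the singular line at $P_0$ in the rescaled field and excluding multi-crossing orbits — to be the main obstacle, whereas the matching argument through $H$ is essentially elementary once Lemma \ref{lem:stabunstab} is available.
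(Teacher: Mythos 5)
Your outline follows the paper's proof almost step for step: you match $\mathcal U_-$ and $\mathcal S_+$ across the singular line by an intermediate-value argument on a strictly monotone mismatch function, you derive the three conditions in \eqref{jump} from the sandwich $\phi_+(0)\le G(0)\le \phi_-(0)$ together with the spiral nature of $P_0$ when $G'(0)<2\nu^2$, and you obtain uniqueness for $G'(0)>0$ by observing that a smooth crossing of $x=0$ would have to pass through $P_0$, which is repulsive on $\{x>0\}$ and attractive on $\{x<0\}$. The only cosmetic difference is the parametrization of the matching: you solve $\phi_-(-s)=\phi_+(s)$ in the abscissa $s$, while the paper solves $\psi_-(y)+\psi_+(y)=0$ in the height $y$ using the inverse functions; these are the same argument.

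There is one step of the paper's proof that your plan omits and that is genuinely needed: the half-orbits must reach the line $x=0$ in \emph{finite} $\zeta$-time. For $s^\ast>0$ this is automatic, since the truncation occurs at $x=\mp s^\ast\neq 0$ where the vector field of \eqref{std2} is regular; but in the continuous case $s^\ast=0$ the glued orbit passes through the critical point $P_0$ of the rescaled system \eqref{stdResc}, which is attained only as $\eta\to\pm\infty$, so without a further argument you would produce a chain of two heteroclinics $P_-\to P_0$ and $P_0\to P_+$ rather than a single orbit with $(x,y)(\pm\infty)=P_\pm$. The paper settles this by writing
\[
	\zeta-\zeta_0=\nu\int_{x(\zeta_0)}^{0}\frac{x}{G(x)-\phi_\pm(x)}\,dx
\]
and noting that, when $\phi_\pm(0)=G(0)$, the slope $\phi_\pm'(0)$ is an eigendirection of the linearization at $P_0$ (Lemma \ref{lem:critical}) and hence differs from $G'(0)$, so the integrand remains bounded near $x=0$ and the integral converges. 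You should add this verification; the remainder of your outline, including the part you flag as the main obstacle, coincides with the paper's argument.
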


\begin{proof}
1. The trajectories $(x,\phi_\pm(x))$, defined in Lemma \eqref{lem:stabunstab}, converge to
the $y-$axis at a finite value of $\zeta$.
Indeed, there holds
\[
	\zeta-\zeta_0=\int_{x(\zeta_0)}^{0} \frac{dx}{dx/d\zeta}
		=\nu\int_{x(\zeta_0)}^{0} \frac{x}{G(x)-\phi_\pm(x)}\,dx.
\]
The latter integral is finite if $\phi_\pm(0)\neq G(0)$.
If $\phi_\pm(0)=G(0)$, then $\phi_\pm'(0)\neq G'(0)$ as can be seen from the 
explicit expression of the principal directions of the singular point $P_0$ of the rescaled 
system \eqref{stdResc} (see Lemma \ref{lem:critical}).
Thus, the function $x/(G(x)-\phi_\pm(x))$ is integrable in small neighborhoods of $0$.\par
2. If $\phi_-(0)=\phi_+(0)=G(0)$, the conjunction of the two orbits $\phi_-$ and $\phi_+$
furnishes the heteroclinic orbit connecting $P_-$ and $P_+$.
In this case, the profile of the corresponding traveling wave is continuous. \par
3. To complete the proof of the first part of the statement, we assume that either
$\phi_-(0)\neq G(0)$ or $\phi_+(0)\neq G(0)$.
Denoted by $\psi_\pm(y)$ the inverse functions of $\phi_\pm$ defined in $[-1,\phi_-(0)]$
and $[\phi_+(0),1]$, respectively, let us consider the function
\[
	h(y):=\psi_+(y)+\psi_-(y)\qquad y\in[\phi_+(0),\phi_-(0)].
\]
Then, by the properties of $\phi_\pm$, the function $h$ is strictly increasing and 
\[
	h(\phi_+(0)):=\psi_-(\phi_+(0))<0<\psi_+(\phi_-(0))=h(\phi_-(0)).
\]
Hence, by continuity of $h$, there exists a single value $y_c$ such that $h(y_c)=0$.
The connection of the trajectory $\phi_-$, truncated at $x=\psi_-(y_c)$,
and $\phi_+$, truncated at $x=\psi_+(y_c)$ gives the desired heteroclinic orbit.\par
4. Any of the conditions in \eqref{jump} implies that one of the trajectories $\phi_\pm$
does not pass through the point $(0,G(0))$.
Indeed, if $G(0)\leq -1$, since the function $\phi_-$ is monotone increasing, then
$\phi_-(0)>-1\geq G(0)$.
Similarly, if $1\leq G(0)$, then $\phi_+(0)$ is strictly greater than $G(0)$.
Finally, if $G'(0)<2\nu^2$, the point $P_0$ is a spiral for the rescaled system \eqref{stdResc}
and no monotone trajectory is alloweded to get onto it.\par
5. To complete the proof, it is sufficient to observe that more can one jump may appear
only if there are smooth solutions of \eqref{std2} passing from the positive side $\{x>0\}$ to
the negative side $\{x<0\}$. 
Because of the structure of the vector field defining the system, the corresponding eventual 
trajectories should pass through the point $P_0=(0,G(0))$, but, in the case $G'(0)>0$, the singular
point $P_0$ is repulsive on $\{x>0\}$  and attractive on $\{x<0\}$ so that no transition from
positive to negative values of $x$ is possible.
\end{proof}

\begin{remark}\rm
In the case $G'(0)\leq 0$, the system \eqref{std2} possesses trajectories passing through
the point $(0,G(0))$ and going from the region $x>0$ to the region $x<0$.
Thus, in principle, solutions may have more than a single jump point and we cannot exclude
that there exists also other possible orbits with the same asymptotic states.
In the concrete case of application of Proposition \ref{prop:existence} to \eqref{hamer}
and \eqref{radeuler} the condition $G'(0)>0$ is satisfied.
\end{remark}

\section{Back to radiation hydrodynamics}\label{sect:zeld}

Next, let us analyze the system \eqref{std2} and the properties of the corresponding heteroclinic orbits
in the case of radiation hydrodynamics as derived in Section \ref{sec:scaling}.
For the reader convenience, let us recall the formulas 
\[
	G(x):=\frac{1}{[g]}\bigl(2\,(g\circ\theta\circ U)(x)-g(\theta_-)-g(\theta_+)\bigr)
		\qquad\textrm{and}\qquad
	\nu:=\frac{\kappa\,\sigma_s\,[u]^2}{\tau\,[g]}.
\]
(see formulas \eqref{defG}--\eqref{defnu}).
As stated in the Introduction, we introduce the key-parameter $\delta:=-[u]/U_c$, so that
the expression of the compound function $(\theta\circ U)=(\theta\circ U)(x)$ becomes
\begin{equation}\label{thetadix}
	(\theta\circ U)(x)=\frac{U_c^2}{\gamma\,R}\left(1-\frac12\,\delta\,x\right)
				\left(1+\frac12\,\gamma\,\delta\,x\right),
\end{equation}
that describes the values of temperature as a function of the auxiliary variable $x$.

We concentrate on a series of significant issues:\\
\indent -- to determine threshold for the presence/absence of disconinuities;\\
\indent -- to analyze the monotonicity of the temperature profiles;\\
\indent -- to study the regimes $\delta\to 0$ and $\delta\to 2/\gamma$;\\
\indent -- to show a short collection of numerical experiments.

At the end of the Section, for completeness, we also mention how the properties
of the reduced system \eqref{std2} translate in the case of the Hamer model.

\subsection*{Discontinuous profiles}
As stated in Proposition \ref{prop:existence}, profiles of the traveling waves are discontinuous
if condition \eqref{jump} is satisfied.
If $g$ is monotone increasing and $[g]>0$, there hold $G(0)\leq -1$ if and only if 
$(\theta\circ U)(0) \leq \theta_-$ and $G(0)\geq 1$ if and only if $\theta_+ \leq (\theta\circ U)(0)$.
Taking advantage of the expression \eqref{thetadix}, the previous relations can be rewritten as 
\begin{align*}
	G(0)\leq -1\;\iff\; 1\leq \left(1+\frac12\,\delta\right)\left(1-\frac12\,\gamma\,\delta\right)
		\;\iff\; \delta\leq \frac{2(1-\gamma)}{\gamma},\\
	G(0)\geq 1\;\iff\; \left(1-\frac12\,\delta\right)\left(1+\frac12\,\gamma\,\delta\right)\leq 1
		\;\iff\; \delta\geq \frac{2(\gamma-1)}{\gamma}.
\end{align*}
For $\gamma>1$ and $\delta>0$, the first condition is never satisfied.

Additionally, there holds
\[
	G'(0)=-\frac{\gamma-1}{R\gamma}\,\frac{[u]}{[g]}\,U_c
		\frac{dg}{d\theta}\left(\frac{U_c^2}{\gamma\,R}\right)
\]
so that condition $G'(0)<2\nu^2$ is equivalent to 
\begin{equation}\label{condspiral}
	\frac{M_0}{A^2}\,U_c\,\frac{dg}{d\theta}\left(\frac{U_c^2}{\gamma\,R}\right)<-\frac{[u]^3}{[g]}
		\qquad\textrm{where}\quad
		M_0:=\frac{2(\gamma-1)^3}{R\gamma(\gamma+1)^2}\,\frac{\tau^2}{\sigma_s^2}
\end{equation}
For $g(\theta)=\sigma\,\theta$, condition \eqref{condspiral} becomes
\[
	\delta>\frac{m_1}{A}
		\qquad\textrm{where}\quad
		m_1:=\frac{\sqrt{2}(\gamma-1)^2}{R\gamma(\gamma+1)}
		\,\frac{\sigma\,\tau}{\sigma_s}
\]
For $g$ as in \eqref{specialpeg} with $\alpha$ integer greater than 1, the explicit formulas 
corresponding to the condition \eqref{condspiral} become very intricated and they do not 
seem to be significant.
In any case, let us stress that differently with respect to the condition $G(0)\geq 1$,
that depends only on the relation between the parameter $\delta$ and the value of the constant $\gamma$,
constraint $G'(0)<2\nu^2$ depends also on the other characteristic parameters of the system.

Summarizing, we can state that the heteroclinic orbit is discontinuous if 
\begin{equation}\label{jumpcond}
	\delta\geq \delta_{\textrm{jump}}:=\frac{2(\gamma-1)}{\gamma}
\end{equation}
Note that, if $\gamma>2$, the region in the plane $(U_c,|[u]|)$ for which this inequality
is satisfied does not intersect the attainable region $\delta<2/\gamma$.


\subsection*{Temperature spikes}
The temperature profile is given by \eqref{thetadix} calculated at $x=x(\zeta)$, the first
component of the heteroclinic orbit of the reduced system \eqref{std2}.
Since $x=x(\zeta)$ is strictly increasing, the monotonicity of the temperature is controlled
by the sign of its first derivative
\[
	\frac{d}{dx}(\theta\circ U)(x)=-\frac{[u]^2}{2\,R}(x-x_c)
	\qquad\textrm{where}\quad
	x_c:=\frac{\gamma-1}{\gamma}\,\frac{1}{\delta}
\]
Hence, the profile has a change in monotonicity  if and only if 
\begin{equation}\label{changemon}
	\delta>\delta_{\textrm{spike}}:=\frac{\gamma-1}{\gamma}
\end{equation}
Such absolute maximum for the temperature is sometime referred to as the {\sf Zel'dovich spike}.
Note that the condition \eqref{changemon} is optimal and depends only on the value of the
constant $\gamma$ (in particular, it does not depend on the specific form of the function $g$!).
For $\gamma>3$, the condition \eqref{changemon} is not satisfied if $\delta<2/\gamma$,
meaning that for such values of $\gamma$ the temperature profile is always monotone increasing.

The location of the absolute maximum point for $\theta$ can be either at the jump
point, if present, or in a region of regularity of the profile. 
In the latter case, the maximum of the temperature is attained at 
$x=x_c$ and its value is
\begin{equation}\label{estimate}
	\theta_{\textrm{\tiny max}}:=(\theta\circ U)(x_c)
		=\frac{(\gamma+1)^2}{4\,\gamma^2}\,\frac{U_c^2}{R}
\end{equation}
If the point of change of monotonicity coincides with the jump point, the values
$\theta_{\textrm{\tiny max}}$ gives only an estimate from above of the maximum
values for the temperature.
It is particularly relevant to note that the expression of $\theta_{\textrm{\tiny max}}$
does depend only on $\gamma$, $R$ and $U_c$.

Estimate \eqref{estimate} can be compared with the corresponding one determined
in \cite{MihaMiha84} (formula (104.67), p.573) for $\gamma<3$
\[
	\theta_{\textrm{\tiny M}}:=(3-\gamma)\,\theta_+.
\]
By using the expression for $\theta_+$ in terms of $U_c$ and $[u]$, we obtain
\[
	\frac{\theta_{\textrm{\tiny max}}}{\theta_{\textrm{\tiny M}}}
	=\frac{(\gamma+1)^2}{4\gamma(3-\gamma)}\,\frac{1}{(1-\delta/2)(1+\gamma\,\delta/2)}.
\]
Since $\delta\in(0,2/\gamma)$, we infer
\begin{equation}\label{compare}
	\frac{1}{3-\gamma}\leq \frac{\theta_{\textrm{\tiny max}}}{\theta_{\textrm{\tiny M}}}
	\leq \frac{(\gamma+1)^2}{8(3-\gamma)(\gamma-1)}.
\end{equation}
In the significant cases $\gamma=5/3$ and $\gamma=7/5$, the above estimates 
becomes
\[
	\gamma=\frac{5}{3}\;:\quad 
	\frac{3}{4}\leq \frac{\theta_{\textrm{\tiny max}}}{\theta_{\textrm{\tiny M}}}\leq 1
		\qquad\qquad
	\gamma=\frac{7}{5}\;:\quad 
	\frac{5}{8}\leq \frac{\theta_{\textrm{\tiny max}}}{\theta_{\textrm{\tiny M}}}
	\leq \frac{9}{8}.
\]
The expression \eqref{compare} can be rewritten as
\[
	1-\frac{2-\gamma}{3-\gamma}\leq \frac{\theta_{\textrm{\tiny max}}}{\theta_{\textrm{\tiny M}}}
	\leq 1+\frac{(3\gamma-5)^2}{8(3-\gamma)(\gamma-1)},
\]
thus, in the regime $\gamma<2$, the first term in this equality chains is strictly smaller than 1
and the last term, apart for the (physical!) case $\gamma=5/3$, 
the estimate from above is strictly greater than $1$.

\subsection*{Limiting regimes}
The expression \eqref{thetadix} shows that $\theta\circ U$ has the form $C\,p(x;\gamma,\delta)$
with $C=U_c^2/\gamma\,R$ and $p$ a polynomial of degree 2 in $x$ with coefficients depending
only on $\delta$ and $\gamma$.
Therefore, in the case of functions $g$ with the power-law form described in \eqref{specialpeg},
the function $G$ in the reduced system \eqref{std} does not changes if the parameters $\gamma$
and $\delta$ are kept fixed.
Here, we want to analyze the behavior of the  heteroclinic orbits built in Section \ref{sec:reduced}
for $\delta$ prescribed in the regimes $\delta\to 0$ and $\delta\to 2/\gamma$.
To this aim, we need to investigate the function $G$ for such values of $\delta$ and 
analyze the variations for the parameter $\nu$.

As $\delta\to 0$, formula \eqref{thetadix} can be rewritten as
\[
	(\theta\circ U)(x)=\frac{U_c^2}{\gamma\,R}\left(1+\frac12(\gamma-1)\,\delta\,x\right)+o(\delta).
\]
and substituting in the definition of $G$, we infer 
\[
	G(x)=x+o(1)\qquad\delta\to 0.
\]
Thus, the function $G$ is strictly increasing and the temperature profile is monotone increasing.
As $\delta\to 2/\gamma$, the relation \eqref{thetadix} becomes
\[
	(\theta\circ U)(x)=\frac{U_c^2}{\gamma^2\,R}\left(\gamma-x\right)\left(1+x\right)
					+o(\delta-2/\gamma)
\]
Thus, we have
\[
	G(x)=G_{\gamma,\alpha}(x)+o(\delta-2/\gamma)\qquad\delta\to \frac{2}{\gamma},
\]
where
\[
	G_{\gamma,\alpha}(x):=-1+2\,\left(\frac{\gamma-x}{\gamma-1}\,\cdot\,\frac{1+x}{2}\right)^{\alpha}
\]
In this case, the function $G$  changes its monotonicity if and only if $1<\gamma<3$ and, if this is the case, 
has an absolute maximum point at $(\gamma-1)/2$; moreover, $G(0)>1$ if and only if $\gamma<2$.
In particular, the temperature profile exhibits the Zel'dovich spike.

Next, we examine the parameter $\nu$ and its behavior for $\delta$ fixed and 
either $[u]\to 0$ or $[u]\to-\infty$ (equivalently, either $U_c\to 0$ or $U_c\to+\infty$). 
The assumption $g(\theta)=\sigma\,\theta^\alpha$ carries also the identity
\[
	[g]=\sigma(\theta_+-\theta_-)(\theta_+^{\alpha-1}+\dots+\theta_-^{\alpha-1})
		=-C\,\sigma\,[u]\,U_c^{2\alpha-1}
\]
for some positive constant $C$ only on $\gamma, \delta, R$ and $\alpha$.
Inserting in the definition of $\nu$ given in \eqref{defnu}, we end up with
\[
	\nu=\frac{C\,\delta}{U_c^{2(\alpha-1)}}
		=\frac{C\,\delta^{2\alpha-1}}{[u]^{2(\alpha-1)}}
\]
for some positive constant $C$ dependent only on 
$\gamma, A=\rho_\pm\,U_\pm,  \sigma, \sigma_a, \sigma_s$.
Hence, for $\alpha>1$, $\nu\to+\infty$ as $[u]\to 0$ (or $U_c\to 0$) 
and $\nu\to 0$ as $[u]\to -\infty$ (or $U_c\to +\infty$).
The case $\alpha=1$ is different, since $\nu$ turns to be constant for $\delta$ fixed
(see Example \ref{ex:geuler}).
For this reason, we only consider the case $\alpha>1$.

As $[u]\to+\infty$, in an appropriate scale, system \eqref{std} formally reduces to
\[
		\frac{dx}{d\zeta} = \frac{G(x)-y}{x},\qquad
		\frac{dy}{d\zeta}=0
\]
For $\delta\to 0$, such structure forces the unstable manifold $\mathcal{U}_-$
of $P_-=(-1,-1)$ and the stable manifold $\mathcal{S}_+$ of $P_-=(+1,+1)$ 
coincide with the graph of the function $G(x)=x+o(\delta)$.
The corresponding temperature profile is continuous and strictly monotone increasing.

For $\delta\to 2/\gamma$, and $\gamma<2$, then the unstable manifold $\mathcal{U}_-$
of $P_-=(-1,-1)$ coincides with the graph of the function $G_{\gamma,\alpha}$;
the stable manifold $\mathcal{S}_+$ of $P_-=(+1,+1)$ is given by a horizontal segment
at height $y=1$.
The heteroclinic orbit is obtained by the connection of such manifold at the (unique) 
point where the stable manifold $\mathcal{U}_-$ is at height $1$ 
(see Figure \ref{fig:casilimite}, left).
Since $G_{\gamma,\alpha}$ is symmetric with respect to $x_c=\frac{1}{2}(\gamma-1)$,
the value $x_-$ of the jump is equal to $2x_c-1=-(2-\gamma)$ and the corresponding
value $x_+$ is $2-\gamma$.
The maximum of the temperature profile is attained at the jump point if and only
if $x_+$ is greater than or equal to $x_c$, that is if and only if $\gamma\leq 5/3$.
\begin{figure}[ht]
\includegraphics[width=7cm, height=5cm]{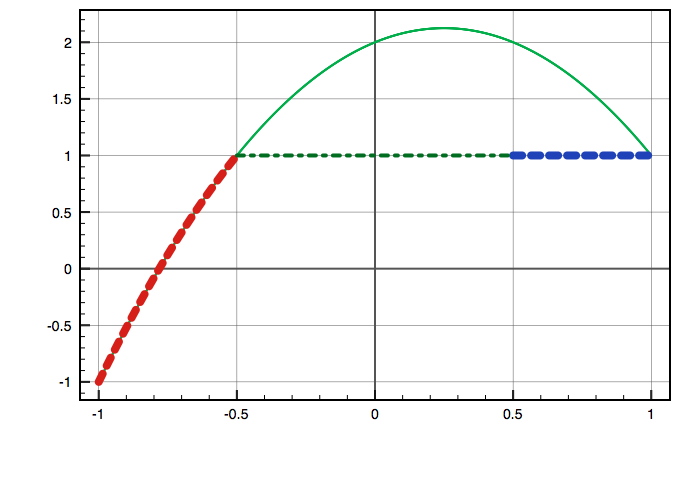}\quad
\includegraphics[width=7cm, height=5cm]{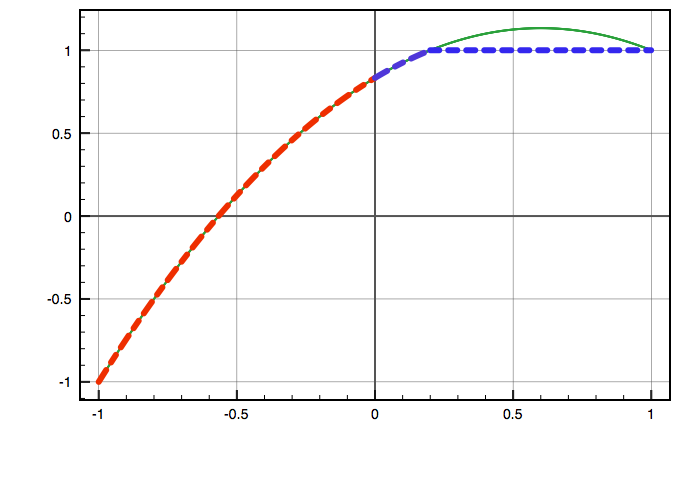}\\
\includegraphics[width=7cm, height=5cm]{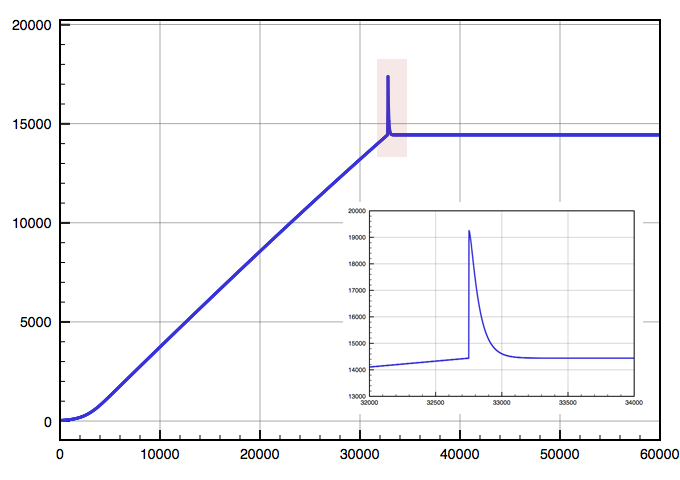}\quad
\includegraphics[width=7cm, height=5cm]{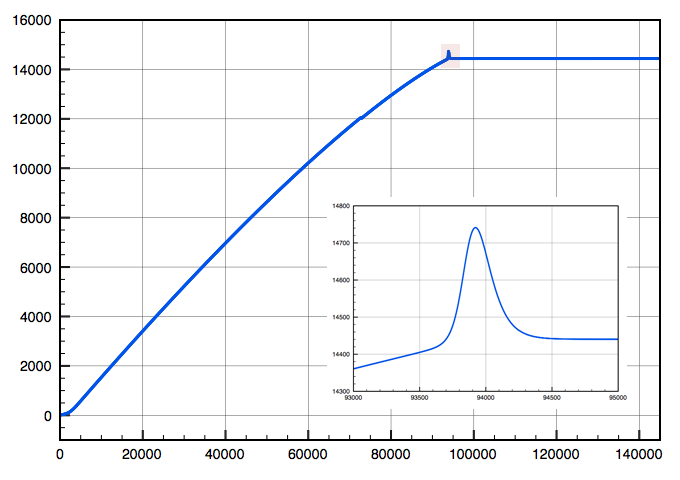}
\caption{\footnotesize Regime $\delta\to 2/\gamma$ and $[u]\to-\infty$.
Upper line: the phase plane $(x,y)$ with the graph of the function $G$ (continuous line) 
and the heteroclinic orbit connecting the points $P_-=(-1,-1)$ and $P_+=(+1,+1)$  
(dashed lines for the continuous part and dashed-dotted line for the jump).
Case $\gamma<2$ (left) and case $\gamma>2$ (right).
Lower line: corresponding temperature profiles for the case $\gamma<2$ (left)
and case $\gamma>2$ (right), both with an inside box showing a zoomed version
of the spike.}
\label{fig:casilimite}
\end{figure}

For $\delta\to 2/\gamma$, and $\gamma\geq 2$, then the unstable manifold $\mathcal{U}_-$ of
$P_-=(-1,-1)$ coincides with the graph of the function $G_{\gamma,\alpha}$ on the interval $[-1,0]$.
The stable manifold $\mathcal{S}_+$ of $P_-=(+1,+1)$ is given by the union of the horizontal segment
connecting the point $(\gamma-2,1)$ and $(1,1)$ and the graph of the function $G_{\gamma,\alpha}$
for $x\in[0,\gamma-2]$ (see Figure \ref{fig:casilimite}, right).

For $[u]\to0$, again in an appropriate scale, system \eqref{std} becomes
\[
		\frac{dx}{d\zeta} = 0,\qquad
		\frac{dy}{d\zeta}= \frac{1}{2}(1-x^2)
\]
In this regime, the limiting dynamics is very simple: the unstable manifold $\mathcal{U}_-$
of $P_-=(-1,-1)$ coincides with the vertical half-line $\{x=-1, y\geq -1\}$ and  the stable
manifold $\mathcal{S}_+$ of $P_-=(+1,+1)$ with the vertical half-line $\{x=1, y\leq 1\}$.
The heteroclinic orbit is described by a direct jump from $\mathcal{U}_-$ to $\mathcal{S}_+$.
The corresponding temperature profile is monotone for $\delta\to 0^+$ and non-monotone
for $\delta\to 2/\gamma$.

The intermediate cases for $\nu$ can be deduced qualitatively by noting that, for $\delta$
fixed and $\nu$ varying, the system \eqref{std} is a rotated vector field, as already observed
in Remark \ref{rem:rotated}.
In particular, as $\nu$ increases, that is as $|u_+-u_-|$ decreases, the trajectory determining
the manifolds $\mathcal{U}_-$ and $\mathcal{S}_+$ rotate counter clockwise passing from
one limiting configuration to the other.

\subsection*{Numerical experiments}
One of the advantage of the reduced system \eqref{std} resides in its semplicity and in 
the possibility of a numerical approximation of the structure of the radiative profiles 
by means of a standard solver for ordinary differential equations, complemented with
the conditions relative to the eventual jump point.
We present here a series of experiments where parameters have been chosen 
only in part realistic.
The value for the adiabatic constant $\gamma$ is taken equal to $5/3$ 
and the constant $R$ is set equal to $8.31$.
All of the constants relative to the radiative coupling, $\sigma, \sigma_s$ and
$\tau$, are chosen equal to 1.
The function $g$ has the form expressed in \eqref{specialpeg}.
As quoted, the realistic exponent $\alpha$ is $4$;
nevertheless, for computational advantage, we choose to set $\alpha=2$
(the case $\alpha=1$ has different behavior in the limiting regimes).
Also, to reduce the number of free parameters, we fix $\rho_-$ at
the value $0.1$. 

Finally, the different cases depend on the choices of the ratio $\delta$ and values
of $U_c$ (the average of the velocities $U_\pm$ at the left and at the right of the profile).
Since $\gamma=5/3$, the thresholds for $\delta$ are 
\[
	\delta_{\textrm{spike}}=0.4\qquad
	\delta_{\textrm{jump}}=0.8\qquad
	\frac{2}{\gamma}=1.2
\]
Being interested in the behavior for weak/strong radiation and weak/strong shocks
we choose $\delta=0.6$ (Figure \ref{fig:temprofile}, left column)  
and $\delta=1.0$ (Figure \ref{fig:temprofile}, right column)
and $U_c$ equal to $10, 50$ and $100$.
\begin{figure}[ht]\label{fig:temprofile}
\includegraphics[width=7cm, height=4cm]{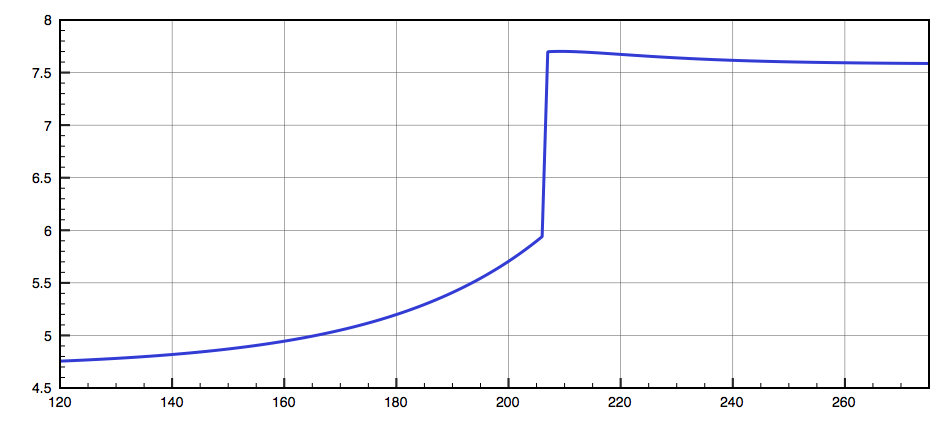}\quad
\includegraphics[width=7cm, height=4cm]{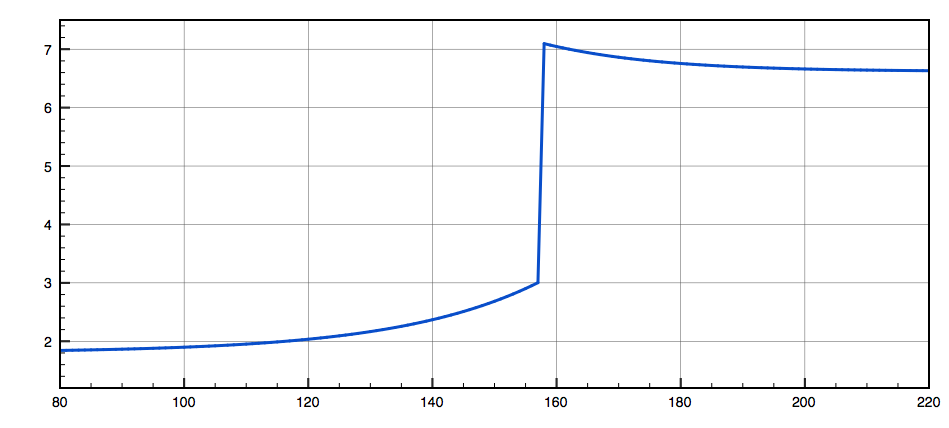}\\
\includegraphics[width=7cm, height=4cm]{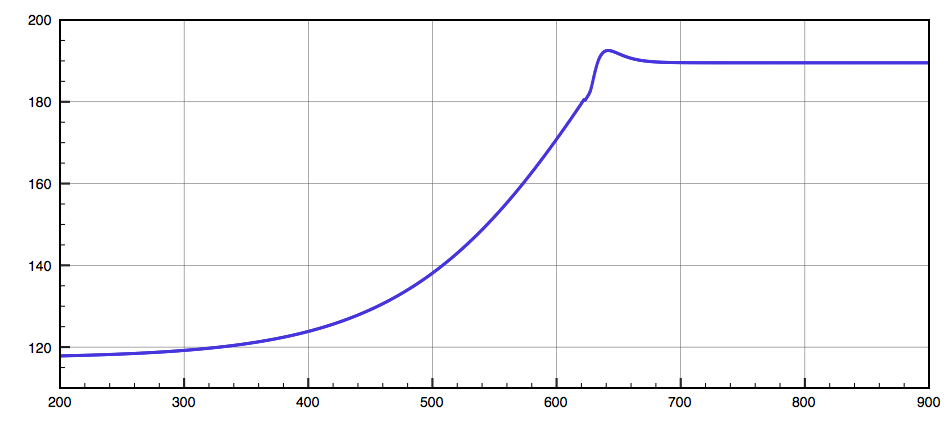}\quad
\includegraphics[width=7cm, height=4cm]{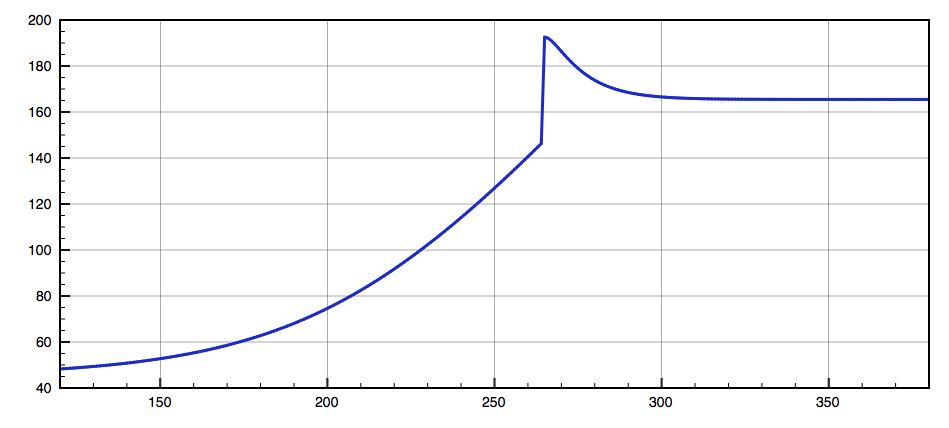}\\
\includegraphics[width=7cm, height=4cm]{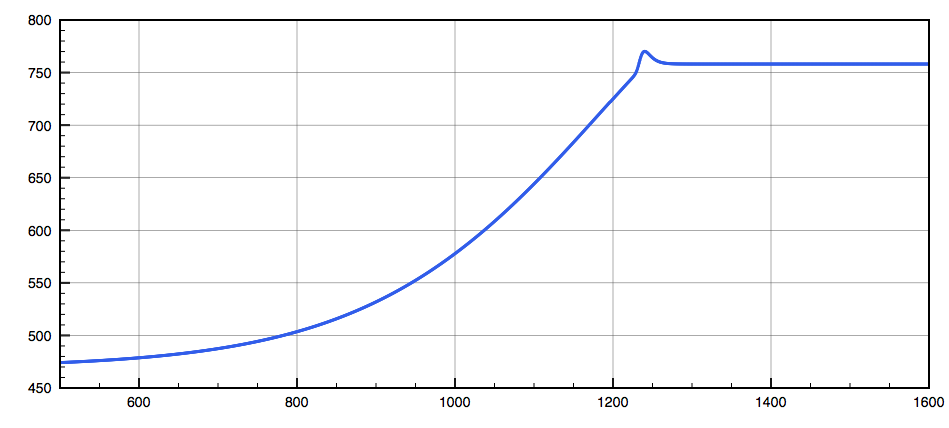}\quad
\includegraphics[width=7cm, height=4cm]{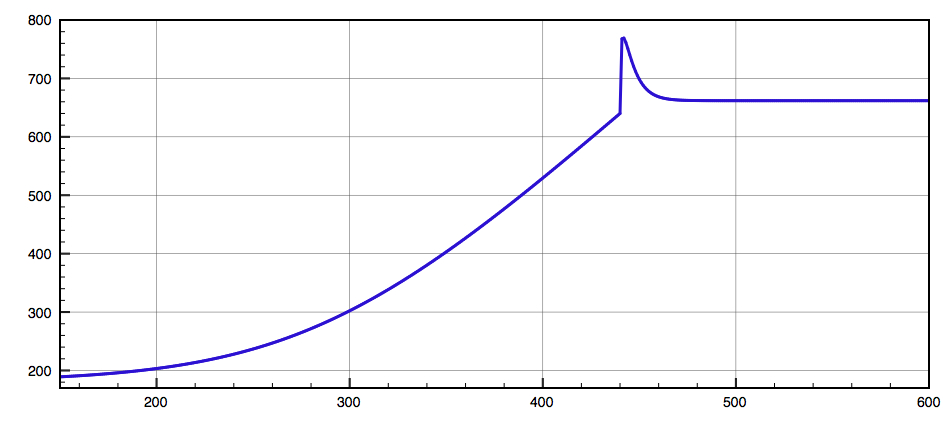}\\
\caption{\footnotesize Temperature profiles in the cases
$\delta=0.6$ (left column) and $\delta=1.0$ (right column), 
corresponding to the values  $U_c=10, 50, 100$ (increasing from top to bottom).}
\end{figure}

Coeherently with the result previously deduced, all of the profiles are non-monotone
since both the values of $\delta$ are taken above the threshold $\delta_{\textrm{spike}}=0.4$.
The choice $\delta=1.0$ is also above the threshold $\delta_{\textrm{jump}}=0.8$ and
thus the profile are discontinuous.
For all of them, the spike coincide with the jump point.
The choice $\delta=0.6$ is consistent with both continuous and discontinuous profiles
and this is revelad by the fact that for $U_c=10$, the temperature has a jump,
and for both $U_c=50$ and $U_c=100$ the profile is continuous.

Higher values for $\delta$, closer to the limiting value $2/\gamma$, give raise to pictures
resembling the one already presented in Figure \ref{fig:casilimite} (left column).
In particular, the width of the spike region shrinks to a single point.

\subsection*{Hamer model}
In the case of the system \eqref{hamer} with $f(s)=\frac12\,s^2$,
if $g$ is monotone increasing, $G(0)\leq -1$ if and only if $c\geq u_-$
and $G(0)\geq 1$ if and only if $u_+\leq c$.
Since $u_+<c<u_-$ for any admissible shock wave, discontinuous orbits
may appear only as a consequence of the  condition $G'(0)<2\nu^2$, that,
in the present case, translates into
\[
	\frac{dg}{du}\left(c\right)<\frac12\,\frac{[u]^3}{[g]}.
\]
Considering $g(u)=\sigma\,u$, $\sigma>0$, (see Example \eqref{ex:gburgers}),
the above condition becomes
\[
	|u_+-u_-|>\sqrt{2}\,\sigma,
\]
that describes the (sharp) threshold for discontinuous profiles (see \cite{KawaNish99}).

The case $g(u)=\sigma\,u^2$ gives the condition
\[
	|u_+-u_-|>2\sqrt{2}\,\sigma\,|c|
\]
showing that discontinuous profiles are possible also in the regime of small $|u_+-u_-|$,
if the sonic value $c$ is sufficiently small.
Note that in this case, the function $G$ is non-monotone in $[-1,1]$ if and only if 
\[
	|u_+-u_-|>2\,|c|,
\]
hence, if $\sigma>1/\sqrt{2}$, both non-monotonicity of $G$ and continuity of the profile
are compatible for some appropriate choice of the asymptotic states $u_\pm$.

\section{Conclusions}

The analysis performed shows that shock waves are robust patterns for the 
system \eqref{radeuler} under the assumption \eqref{specialpeg}.
Indeed, the existence of such structures is not limited to specific ranges for the asymptotic
states; oppositely, they do exist for any regime consistent with the reduced hyperbolic system,
obtained by disregarding the radiation effects.
Additionally, the system of algebraic-differential system can be dealth with in a rigorous manner
with a restricted number of technicalities and, at the same time, with a good number of quantitative
informations relative to the internal structure of the shock transition, specifically with respect to the
presence of change of monotonicity.
This allows to extend the analysis carried out in the region $\delta\to 2/\gamma$ in \cite{MihaMiha84} 
to any regime and gives a sound basis to the result formally derived in \cite{HeasBald63}.
Also, the observation that, for fixed $\delta$, the reduced system \eqref{std} depends on the
value $|U_+-U_-|$ (or, equivalently, on $U_c$) as a {\sf rotated vector field} permits, in principle, 
to follow the changes in the qualitative properties of the profiles in a ``monotone'' way with respect
to the variation of such value.

The relative simplicity and solidity of the structure of the problem is also revealed from the
fact that there is a great freedom in the possible form for the coupling term $g$ --though the power-like
form is the most accredited version-- and that there is a reduced number of parameters required 
to classify different forms for the profiles.
More general forms for pressure and internal energy should also preserve the existence of
the radiative profiles so that it is reasonable that the results presented in this article extend 
to more general fluids.

A natural subsequent step is the stability analysis for such radiation profiles, in particular in
the presence of spikes.
Such problem could be approached taking inspiration from other classes of systems of partial
differential equations in the context of conservation laws that support non-monotone traveling
wave as in the case of combustion waves.
Also, it would be very interesting to analyze the case of radiation hydrodynamics in more
general forms and specifically without taking advantage of the assumptions that reduce the
model to a hyperbolic-elliptic system of partial differential equations.
In particular, considering the native form of a continuous-kinetic coupled model would be
a very challenging and stimulating research project.


\begin{thebibliography}{99}

\bibitem{BuetDesp04}
Buet C.; Despr\'es B.; 
{\it Asymptotic analysis of fluid models for the coupling of radiation and hydrodynamics}, 
J. Quant. Spectrosc. Radiat. Transfer 85 (2004) 385--418.

\bibitem{CGLL}
Coulombel J.-F.; Goudon Th.; Lafitte P.; Lin C.;
{\it Analysis of large amplitude shock profiles for non-equilibrium
radiative hydrodynamics: formation of Zeldovich spikes},
preprint,
{\tt hal-00593491, version 1 - 16 May 2011}.

\bibitem{Duff53}
Duff, G. F. D.;
{\it Limit-cycles and rotated vector fields},
Ann. of Math. (2) 57, (1953) 15--31.

\bibitem{Gilb51}
Gilbarg D.;
{\it The existence and limit behavior of the one-dimensional shock layer},
Amer. J. Math. 73 (1951) no.2, 256--274.

\bibitem{GodiGoud05}
Godillon-Lafitte P.; Goudon T.; 
{\it A coupled model for radiative transfer: Doppler effects, equilibrium
and non equilibrium diffusion asymptotics}, 
SIAM Multiscale Modeling and Simulation 4 (2005) no.4, 1245--1279.

\bibitem{Hame71}
Hamer K.; 
{\it Nonlinear effects on the propagation of sound waves in a radiating gas}, 
Quart. J. Mech. Appl. Math. 24 (1971), 155--168.

\bibitem{HeasBald63}
Heaslet, M.A.; Baldwin, B.S.;
{\it Predictions of the structure of radiation-resisted shock waves}, 
Phys. Fluids 6 (1963) 781--791. 

\bibitem{KawaNish99}
Kawashima, S.; Nishibata, S.;
{\it Shock waves for a model system of the radiating gas},
SIAM J. Math. Anal. 30 (1999) no. 1, 95--117.

\bibitem{LMS1}
Lattanzio C.; Mascia C.; Serre D.; 
{\it Shock waves for radiative hyperbolic-elliptic systems},
Indiana Univ. Math. J. \textbf{56} (2007) no.5, 2601--2640.

\bibitem{LMS2}
Lattanzio C.; Mascia C.; Serre D.; 
{\it Nonlinear hyperbolic-elliptic coupled systems arising in radiation dynamics}, 
in ``Hyperbolic Problems: Theory, Numerics, Applications (Lyon, July 17-21, 2006)'',
S.Benzoni-Gavage and D.Serre, eds., Springer-Verlag, Boston, Berlin,
Heidelberg, 2008, 661--669.

\bibitem{LinCoulGoud06}
Lin, C.; Coulombel, J.-F.; Goudon, T.;
{\it Shock profiles for non-equilibrium radiating gases},
Phys. D 218 (2006) no. 1, 83--94. 

\bibitem{LowrMoreHitt99}
Lowrie R. B.; Morel J. E.; Hittinger, J. A.; 
{\it The coupling of radiation and hydrodynamics},
The Astrophysical J. 521 (1999) 432--450.

\bibitem{MihaMiha84}
Mihalas, D.; Mihalas, B. W.,
``Foundations of radiation hydrodynamics'',
Oxford University Press, New York, 1984;
reprinted by  Dover Publications, New York, 1999.

\bibitem{Perk93}
Perko, L. M.;
{\it Rotated vector fields},
J. Differential Equations 103 (1993), no. 1, 127--145. 

\bibitem{SchoTadm92}
Schochet, S.; Tadmor, E.;
{\it The regularized Chapman-Enskog expansion for scalar conservation laws},
Arch. Rational Mech. Anal. 119 (1992) no. 2, 95--107. 

\bibitem{ZeldRaiz02}
Zel'dovich,  Y. B.; Raizer, Y. P.;
``Physics of shock waves and high-temperature hydrodynamic phenomena'', 
Academic Press, New York, 1967;  
reprinted by  Dover Publications, New York, 2002.

\end{thebibliography}
\end{document}